\title{A probabilistic analysis of the Neumann series iteration}
\newcommand\thankssymb[1]{\textsuperscript{\@fnsymbol{#1}}}
\author[Yiting Zhang]{Yiting Zhang\thankssymb{1}}
\author[Thomas Trogdon]{Thomas Trogdon\thankssymb{2}}
\thanks{\thankssymb{1} University of California, Irvine. Email: zjx687691@gmail.com}
\thanks{\thankssymb{2} Department of Applied Mathematics, University of Washington. Email: trogdon@uw.edu.  Supported in part by NSF DMS-1753185, NSF DMS-1945652}
\newtheorem{lemma}{Lemma}[section]
\newtheorem{theorem}{Theorem}[section]
\newtheorem{proposition}{Proposition}[section]
\theoremstyle{definition}
\newtheorem{definition}{Definition}[section]
\theoremstyle{remark}
\newtheorem{remark}{Remark}
\renewcommand{\vec}{\mathbf}
\renewcommand{\complement}{\mathrm{c}}
\newcommandx{\unsure}[2][1=]{\todo[linecolor=red,backgroundcolor=red!25,bordercolor=red,#1]{#2}}
\newcommandx{\change}[2][1=]{\todo[linecolor=blue,backgroundcolor=blue!25,bordercolor=blue,#1]{#2}}
\newcommandx{\info}[2][1=]{\todo[linecolor=OliveGreen,backgroundcolor=OliveGreen!25,bordercolor=OliveGreen,#1]{#2}}
\newcommandx{\improvement}[2][1=]{\todo[linecolor=red,backgroundcolor=gray!25,bordercolor=gray,#1]{#2}}
\newcommandx{\thiswillnotshow}[2][1=]{\todo[disable,#1]{#2}}
\begin{document}

\maketitle

\begin{abstract}
    Given a random matrix $A$ with eigenvalues between $-1$ and $1$, we analyze the number of iterations needed to solve the linear equation $(I-A)\vec x=\vec b$ with the Neumann series iteration. We give sufficient conditions for convergence of an upper bound of the iteration count in distribution. Specifically, our results show that when the scaled extreme eigenvalues of $A$ converge in distribution, this scaled upper bound on the number of iterations will converge to the reciprocal of the limiting distribution of the largest eigenvalue.
\end{abstract}
\section{Introduction}
Neumann series was introduced by Carl Neumann in 1877 in the context of potential theory \cite{neumann1877untersuchungen}. Neumann
series, or the more advanced Liouville-Neumann series has been applied to solve Fredholm integral equations \cite{tricomi1985integral}. In fact, apart from the theoretical applications of Neumann series, it plays an important role in solving computational problems. The Neumann series iteration, $\vec x^{(k)}=A\vec x^{(k-1)}+\vec b$, follows naturally from the actual Neumann series, i.e., $(I-A)^{-1}=\sum_{i=0}^{\infty}A^{k}$ when solving $(I-A)\vec x=\vec b$.

In this paper, we find that if the eigenvalues of an $n\times n$ symmetric matrix $A$ fall between $-1$ and $1$ and if the scaled extreme eigenvalues of $A$ converge in distribution as $n\to \infty$, then after scaling, a scaled upper bound on the number of iterations needed to solve $(I-A)\vec x=\vec b$ with the Neumann series iteration will converge to the reciprocal of the limiting distribution of the largest eigenvalue.

This provides the first step in the full probabilistic analysis of the Neumann series iteration.  In particular, our results show that a reasonably sharp upper bound depends only on the (rescaled) extreme eigenvalues as the matrix size tends to infinity.  The limiting distributions of these eigenvalues are often universal --- they are independent of distributional details of matrix entries\footnote{This can be true, within a class of distributions.}.  Therefore, one expects the convergence rate of the Neumann series to inherit this universality.  This phenomenon has been observed in many algorithms \cite{DiagonalRMT,Deift2014,Deift2017c} and rigorously established for eigenvalue algorithms \cite{Deift2016,Deift2017}.

This paper unfolds as follows. In Section 2 we introduce the algorithm and the halting criterion. In Section 3 we state the main theorem and give two examples where it applies. The proof of the main theorem is given in Section 4.
\section{The algorithm}
\subsection{The Neumann series iteration}
We first define Neumann series.
\begin{definition}
For $A\in \mathbb R^{n\times n}$, the Neumann series is defined formally as:
\begin{align*}
    \sum_{i=0}^\infty A^{i}=I+A^{1}+A^{2}+\cdots.
\end{align*}
\end{definition}
According to the above definition, we are interested in sufficient conditions for the Neumann series to converge. The following lemma and theorem provides the key for this study.
\begin{lemma}\label{lem:neumann}
If the spectral norm\footnote{$\|\cdot\|$ here denotes the matrix norm induced by the $\ell^{2}$-norm. In fact, this lemma can be generalized to any sub-multiplicative norm.} of $A$ satisfies $\|A\|<1$, then $(I-A)^{-1}$ exists, and
\begin{align*}
    (I-A)^{-1}=I+A+A^{2}+\cdots=\sum_{i=0}^\infty A^{i}.
\end{align*}
\end{lemma}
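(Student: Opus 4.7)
The plan is to establish the result in two steps: first prove that the series $\sum_{i=0}^\infty A^i$ converges (in the Banach space of $n\times n$ matrices equipped with the spectral norm), then show via a telescoping identity that its limit is the two-sided inverse of $I-A$.

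For convergence, I would invoke submultiplicativity of the spectral norm to obtain $\|A^i\| \le \|A\|^i$. Since $\|A\| < 1$, the real series $\sum_{i=0}^\infty \|A\|^i$ converges to $1/(1-\|A\|)$, so $\sum_{i=0}^\infty A^i$ converges absolutely. Because the space of $n \times n$ real matrices is finite-dimensional and therefore complete under any norm, absolute convergence implies convergence, so the partial sums $S_N := \sum_{i=0}^N A^i$ have a well-defined limit $S \in \mathbb R^{n\times n}$.

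Next I would exploit the elementary algebraic identity
\begin{align*}
(I-A)\,S_N \;=\; (I-A)\sum_{i=0}^N A^i \;=\; \sum_{i=0}^N A^i - \sum_{i=1}^{N+1} A^i \;=\; I - A^{N+1},
\end{align*}
and similarly $S_N(I-A) = I - A^{N+1}$. Since $\|A^{N+1}\| \le \|A\|^{N+1} \to 0$, letting $N \to \infty$ and using continuity of matrix multiplication yields $(I-A)S = S(I-A) = I$. This simultaneously proves that $I-A$ is invertible and identifies its inverse as $S = \sum_{i=0}^\infty A^i$.

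There is essentially no major obstacle here: the proof is a standard application of the geometric-series trick in a Banach algebra, and the only thing one has to be slightly careful about is justifying the interchange of limit and multiplication, which follows from the continuity of the matrix product (equivalently, from $\|AB\| \le \|A\|\|B\|$ together with boundedness of $S_N$). The argument makes no use of symmetry or any spectral information beyond the norm bound, and it extends verbatim to any unital Banach algebra with submultiplicative norm, as hinted in the footnote.
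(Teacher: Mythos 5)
Your proof is correct. The paper does not supply its own argument for this lemma; it simply refers the reader to Burden and Faires, and your write-up is the standard geometric-series/Neumann-series argument one finds there: comparison with $\sum\|A\|^i$ to get convergence in the (complete) matrix space, the telescoping identity $(I-A)S_N = I - A^{N+1}$, and a limit. The one small structural difference worth noting is the order of operations: the textbook version first establishes that $I-A$ is nonsingular by a separate argument (if $(I-A)x=0$ with $x\neq 0$ then $Ax=x$, contradicting $\|Ax\|\le\|A\|\|x\|<\|x\|$) and only then identifies the inverse with the limit of $S_N$, whereas you skip that step and instead exhibit $S=\lim S_N$ directly as a two-sided inverse via $(I-A)S = S(I-A) = I$, which proves invertibility and identifies the inverse simultaneously. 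Your route is slightly cleaner and, as you observe, carries over verbatim to any unital Banach algebra, while the textbook's detour through the vector-level contradiction is a bit more elementary and concrete for a numerical-analysis audience. Both are complete and correct.
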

\begin{theorem}\label{t:neumann}
Given $A\in \mathbb R^{n\times n}$ with $\|A\|<1$ and $\vec b\in \mathbb {R}^{n}$, the numerical solution of $(I-A)\vec x=\vec b$ is found by applying the Neumann series iteration:
\begin{align*}
    \vec x_0 &= \vec 0,\\
    \vec x_k &= A\vec x_{k-1} + \vec b\\
    &=(I+A+A^{2}+\cdots+A^{k-1})\vec b\\
    &=\sum_{i=0}^{k-1}A^{i} \vec b, \quad k = 1,2,3\ldots.
\end{align*}
Here $\vec x_{k}$ converges to $\vec x=(I-A)^{-1}\vec b$ as $k\to \infty$.
\end{theorem}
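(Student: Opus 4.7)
The plan is to verify the closed form for $\vec x_k$ by induction and then pass to the limit using Lemma~\ref{lem:neumann}. First I would show by induction on $k$ that $\vec x_k=\sum_{i=0}^{k-1}A^{i}\vec b$. The base case $k=1$ is immediate since $\vec x_1=A\vec 0+\vec b=\vec b$. For the inductive step, assuming $\vec x_{k-1}=\sum_{i=0}^{k-2}A^{i}\vec b$, one computes
\begin{align*}
\vec x_k = A\vec x_{k-1}+\vec b = A\sum_{i=0}^{k-2}A^{i}\vec b + \vec b = \sum_{i=1}^{k-1}A^{i}\vec b + A^{0}\vec b = \sum_{i=0}^{k-1}A^{i}\vec b,
\end{align*}
which gives the stated partial-sum representation.

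Next I would invoke Lemma~\ref{lem:neumann}, which under the hypothesis $\|A\|<1$ guarantees that $(I-A)^{-1}$ exists and equals the convergent series $\sum_{i=0}^{\infty}A^{i}$. In particular, the partial sums $S_k=\sum_{i=0}^{k-1}A^{i}$ converge to $(I-A)^{-1}$ in spectral norm as $k\to\infty$.

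To conclude, I would quantify the error $\vec x - \vec x_k=\sum_{i=k}^{\infty}A^{i}\vec b$ using sub-multiplicativity of the induced $\ell^{2}$-norm:
\begin{align*}
\|\vec x-\vec x_k\| \leq \sum_{i=k}^{\infty}\|A\|^{i}\|\vec b\| = \frac{\|A\|^{k}}{1-\|A\|}\|\vec b\|,
\end{align*}
a geometric tail that tends to zero since $\|A\|<1$. Therefore $\vec x_k\to (I-A)^{-1}\vec b=\vec x$, as claimed.

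There is no real obstacle here: the statement is essentially a packaging of Lemma~\ref{lem:neumann} together with the elementary observation that the fixed-point iteration for $\vec x=A\vec x+\vec b$ unfolds into exactly the partial sums of the Neumann series. The only point that requires any care is to be explicit about the mode of convergence, namely convergence in the induced $\ell^{2}$-norm, which follows from a geometric-series bound as above.
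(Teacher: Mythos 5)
The paper does not prove Theorem~\ref{t:neumann} itself; it defers to \cite[p.~457]{Burden2011}. Your argument is correct and is precisely the standard textbook proof that reference would give: induction to identify $\vec x_k$ with the partial sum $\sum_{i=0}^{k-1}A^i\vec b$, then Lemma~\ref{lem:neumann} together with the geometric-tail bound $\|\vec x-\vec x_k\|\leq \frac{\|A\|^k}{1-\|A\|}\|\vec b\|\to 0$ to establish convergence.
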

The proof of the above lemma and theorem can be found in \cite[p.~457]{Burden2011}. The Neumann series iteration is the numerical algorithm we will use throughout this paper.
\subsection{Halting criterion}
The asymptotic behavior of $\vec x_{k}$ is well known by Theorem~\ref{t:neumann}, but we are more interested in the non-asymptotic case. Given a halting criterion, we are interested in the number of iterations needed to achieve that criterion. Two natural halting times are defined as follows:
\begin{definition}
Given $\epsilon>0$, define $k_{\epsilon}(A,\vec b)$ and $k_{\epsilon}^{*}(A,\vec b)$ by
\begin{align*}
    k_{\epsilon}(A,\vec b)&=\min \{k: \left\|\vec x-\vec x_{k}\right\|<\epsilon\},\\
    k_{\epsilon}^{*}(A,\vec b)&=\min \{k:\left\|(I-A)\vec x_{k}-\vec b\right\|<\epsilon\}.
\end{align*}
\end{definition}
For simplicity, our results will only concern upper bounds for the halting times:
\begin{definition}\label{d:K}
Given $\epsilon>0$, we define $K_{\epsilon}(A,\vec b)$ and $K_{\epsilon}^{*}(A,\vec b)$ to be
\begin{align*}
    K_{\epsilon}(A,\vec b)&=\min \left\{k: \left\|\sum_{i=k}^{\infty}A^{i}\right\|\left\|\vec b\right\|<\epsilon\right\},\\
    K_{\epsilon}^{*}(A,\vec b)&=\min \left\{k: \left\|(I-A)\sum_{i=k}^{\infty}A^{i}\right\|\left\|\vec b\right\|<\epsilon\right\}.
\end{align*}
\end{definition}
\begin{proposition}
$k_{\epsilon}(A,\vec b)\leq K_{\epsilon}(A,\vec b)$ and $k_{\epsilon}^{*}(A,\vec b)\leq K_{\epsilon}^{*}(A,\vec b)$.
\end{proposition}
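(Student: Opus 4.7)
The plan is to observe that both inequalities follow directly from expressing the relevant error quantities as tails of the Neumann series and then applying submultiplicativity of the spectral norm.

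First I would compute the exact error $\vec x - \vec x_k$. By Theorem~\ref{t:neumann}, $\vec x = \sum_{i=0}^\infty A^i \vec b$ while $\vec x_k = \sum_{i=0}^{k-1} A^i \vec b$, so
\[
\vec x - \vec x_k = \sum_{i=k}^{\infty} A^i \vec b.
\]
Taking norms and pulling the vector out using submultiplicativity yields
\[
\|\vec x - \vec x_k\| \;\leq\; \Big\|\sum_{i=k}^{\infty} A^i\Big\|\,\|\vec b\|.
\]
Therefore whenever $k$ satisfies the inequality defining $K_\epsilon(A,\vec b)$, it also satisfies the inequality defining $k_\epsilon(A,\vec b)$. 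Taking the minimum over such $k$ on the right gives $k_\epsilon(A,\vec b)\leq K_\epsilon(A,\vec b)$.

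For the starred inequality I would use the analogous identity for the residual. Since $(I-A)\vec x = \vec b$, we have
\[
(I-A)\vec x_k - \vec b = (I-A)(\vec x_k - \vec x) = -(I-A)\sum_{i=k}^\infty A^i \vec b,
\]
so again
\[
\|(I-A)\vec x_k - \vec b\| \;\leq\; \Big\|(I-A)\sum_{i=k}^\infty A^i\Big\|\,\|\vec b\|,
\]
and the same minimization argument yields $k_\epsilon^*(A,\vec b)\leq K_\epsilon^*(A,\vec b)$.

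There is no real obstacle here; the statement is essentially a bookkeeping consequence of the telescoping identity for $\vec x - \vec x_k$ and submultiplicativity. The only thing to be careful about is that Lemma~\ref{lem:neumann} guarantees convergence of the series $\sum_{i=k}^\infty A^i$ in operator norm when $\|A\|<1$, so each of the tail norms appearing above is finite and the minima in Definition~\ref{d:K} are attained (equivalently, the sets are nonempty), making the comparison $k_\epsilon \leq K_\epsilon$ well-posed.
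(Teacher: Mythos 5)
Your proof is correct and follows essentially the same route as the paper: express the error $\vec x - \vec x_k$ as the tail $\sum_{i=k}^{\infty} A^i \vec b$ of the Neumann series, bound it by $\|\sum_{i=k}^\infty A^i\|\,\|\vec b\|$, and do the analogous manipulation for the residual. The closing remark about the sets in Definition~\ref{d:K} being nonempty when $\|A\|<1$ is a harmless extra.
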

\begin{proof}
For the first inequality, we show that 
\begin{align*}
    \left\|\vec x-\vec x_{k}\right\|\leq \left\|\sum_{i=k}^{\infty}A^{i}\right\|\left\|\vec b\right\|.
\end{align*} 
The exact solution of $(I-A)\vec x=\vec b$ is $\vec x=(I-A)^{-1}\vec b$. Based on Lemma~\ref{lem:neumann}, $\vec x=\sum_{i=0}^{\infty} A^{i}\vec b$. By Theorem~\ref{t:neumann}, we also know that $\vec x_k=\sum_{i=0}^{k-1}A^{i}\vec b$. Therefore, we have
\begin{align*}
    \left\|\vec x- \vec x_k\right\|=\left\|\sum_{i=0}^{\infty} A^{i}\vec b-\sum_{i=0}^{k-1} A^{i}\vec b\right\|=\left\|\sum_{i=k}^{\infty} A^{i}\vec b\right\|\leq \left\|\sum_{i=k}^{\infty} A^{i}\right\|\|\vec b\|.
\end{align*}
Similarly, for the second inequality, we show that 
\begin{align*}
    \left\|(I-A)\vec x_{k}-\vec b\right\|\leq \left\|(I-A)\sum_{i=k}^{\infty}A^{i}\right\|\left\|\vec b\right\|.
\end{align*}
If $\vec x$ is the exact solution, we have
\begin{align*}
    \left\|(I-A) \vec x_k - \vec b\right\|&=\left\|(I-A) \vec x_k - \vec b-[(I-A)\vec x-\vec b]\right\|\\
    &=\left\|(I-A) (\vec x_k - \vec x)\right\|\\
    &=\left\|(I-A) \sum_{i=k}^{\infty}A^{i}\vec b\right\|\\
    &\leq \left\|(I-A)\sum_{i=k}^{\infty}A^{i}\right\|\left\|\vec b\right\|.
\end{align*}
Thus, $k_{\epsilon}(A,\vec b)\leq K_{\epsilon}(A,\vec b)$ and $k_{\epsilon}^{*}(A,\vec b)\leq K_{\epsilon}^{*}(A,\vec b)$.
\end{proof}
Now, to show that the upper bounds $K_\epsilon$ and $K^*_\epsilon$ are sharp we give a sufficient condition for equality to hold. Suppose $\lambda$ is the largest eigenvalue of $A$, and $0<\lambda<1$. If $\vec b$ is the eigenvector of $A$ which corresponds to $\lambda$, we have $k_{\epsilon}(A,\vec b)=K_{\epsilon}(A,\vec b)$. This can be verified by showing that $\|\vec x-\vec x_{k}\|=\left\|\sum_{i=k}^{\infty} A^{i}\right\|\|\vec b\|$:
\begin{align*}
    \|\vec x- \vec x_k\|=\left\|\sum_{i=k}^{\infty} A^{i}\vec b\right\|=\left\|\sum_{i=k}^{\infty} \lambda^{i}\vec b\right\|=\sum_{i=k}^{\infty} \lambda^{i}\|\vec b\|=\left\|\sum_{i=k}^{\infty}A^{i}\right\|\|\vec b\|.
\end{align*}
Suppose the largest eigenvalue of $(I-A)$ is $\mu$ and $0<\mu<1$. If $\vec b$ is the eigenvector of $(I-A)$ which corresponds to $\mu$, we have $k_{\epsilon}^{*}(A,\vec b)= K_{\epsilon}^{*}(A,\vec b)$. The verification is similar as before.
\section{Results}
In this section we first state the main theorem and then provide two examples where it applies.
\subsection{Main theorem}
\begin{definition}
A random variable $X_{n}$ converges in distribution to $X$ as $n\to \infty$ if
\begin{align*}
    F_{X_n}(t)=\mathbb {P}(X_{n}\leq t)\to \mathbb {P}(X\leq t)=F_{X}(t)
\end{align*}
as $n \to \infty$ at every $t$ where $F_{X}(t)$ is continuous. Here $F_{X_n}(t)$ and $F_{X}(t)$ are the cumulative distribution functions of $X_{n}$ and $X$, respectively.
\end{definition}
\begin{theorem}\label{t:main}
Suppose $(M_{n})_{n\geq 1}$, $M_{n}\in \mathbb {R}^{n\times n}$ (or $\mathbb {C}^{n\times n}$) is a sequence of symmetric (or Hermitian) random matrices with eigenvalues:
\begin{align*}
    -1<\lambda_{1}\leq \lambda_{2}\leq \cdots\leq \lambda_{n}<1,\quad \lambda_{j}=\lambda_{j}(n).
\end{align*}
Suppose for some $\alpha \geq \beta >0$, we have
\begin{align*}
    n^{\alpha}(1-\lambda_{n})&\to X\text{ in distribution as }n\to \infty,\\
    n^{\beta}(\lambda_{1}+1)&\to Y\text{ in distribution as }n\to \infty,
\end{align*}
where both $F_{X}$ and $F_{Y}$ are continuous and supported on $[0,\infty)$. Let $\vec b$ be a unit vector and fix  $0<\epsilon < 1/2$. Then
\begin{align*}
    \frac {K_{\epsilon}(M_{n}, \vec b)}{\alpha \log (n/\epsilon^{1/\alpha})n^{\alpha}}\to \frac {1}{X}
\end{align*}
in distribution as $n\to \infty$.
\end{theorem}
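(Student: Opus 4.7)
The plan is to reduce everything to an explicit eigenvalue computation and then read off the asymptotics. Since $M_n$ is symmetric (or Hermitian) with $\|M_n\| < 1$, the tail sum $\sum_{i=k}^{\infty} M_n^i = M_n^{k}(I - M_n)^{-1}$ is diagonalized by the eigenbasis of $M_n$ with eigenvalues $\lambda_j^{k}/(1-\lambda_j)$. With $\|\vec b\| = 1$, Definition~\ref{d:K} becomes
\begin{align*}
K_{\epsilon}(M_{n},\vec b) = \min\left\{k : \max_{1 \leq j \leq n} \frac{|\lambda_{j}|^{k}}{1 - \lambda_{j}} < \epsilon \right\}.
\end{align*}
A calculus check shows that $|\lambda|^{k}/(1-\lambda)$ is increasing on $[0,1)$ and decreasing on $(-1, 0]$, so the maximum is attained at one of the extreme eigenvalues, and $K_{\epsilon}(M_{n},\vec b) = \max(k_{n}^{+}, k_{n}^{-})$ where
\begin{align*}
k_{n}^{+} = \left\lceil \frac{\log(\epsilon(1 - \lambda_{n}))}{\log \lambda_{n}} \right\rceil, \qquad
k_{n}^{-} = \left\lceil \frac{\log(\epsilon(1 - \lambda_{1}))}{\log |\lambda_{1}|} \right\rceil,
\end{align*}
at least when $\lambda_n > 0$; the hypotheses force $\lambda_n \to 1$ and $\lambda_1 \to -1$ in probability, placing the extreme eigenvalues in the required half-lines with probability tending to one.

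Plugging in $1 - \lambda_{n} = X_{n}/n^{\alpha}$ with $X_n \to X$ in distribution and expanding $\log(1-t) = -t + O(t^{2})$ yields
\begin{align*}
k_{n}^{+} = \frac{n^{\alpha}}{X_{n}}\bigl(\alpha \log n - \log X_{n} - \log \epsilon\bigr)(1 + O(n^{-\alpha})) + O(1),
\end{align*}
and a parallel computation with $1 + \lambda_{1} = Y_{n}/n^{\beta}$ gives $k_{n}^{-} = n^{\beta}(-\log(2\epsilon))/Y_{n} \cdot (1 + o(1))$. Because $\alpha \geq \beta$ and $k_{n}^{+}$ carries an extra factor of $\log n$ in its numerator, the ratio $k_{n}^{-}/k_{n}^{+}$ tends to zero in probability, so $K_{\epsilon}(M_{n},\vec b) = k_{n}^{+}$ with probability tending to one.

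Dividing $k_{n}^{+}$ by the normalizer $\alpha \log(n/\epsilon^{1/\alpha}) n^{\alpha} = (\alpha \log n - \log \epsilon) n^{\alpha}$ rearranges to
\begin{align*}
\frac{k_{n}^{+}}{\alpha \log(n/\epsilon^{1/\alpha}) n^{\alpha}} = \frac{1}{X_{n}}\left(1 - \frac{\log X_{n}}{\alpha \log n - \log \epsilon}\right) + o_{p}(1).
\end{align*}
Continuity of $F_{X}$ on $[0,\infty)$ forces $\mathbb{P}(X = 0) = 0$, so the continuous mapping theorem gives $1/X_{n} \to 1/X$ in distribution, while the parenthesized factor tends to one in probability since $\log X_{n}$ is tight and $\alpha \log n \to \infty$. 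Slutsky's theorem then delivers the claimed distributional limit. The main subtlety is establishing $K_{\epsilon} = k_{n}^{+}$ with high probability: the polynomial gap $n^{\alpha}$ versus $n^{\beta}$ alone does not suffice when $\alpha = \beta$, so the crucial ingredient is the extra $\log n$ factor arising from the expansion of $\log(1 - X_{n}/n^{\alpha})$. Carefully separating deterministic error terms from $o_{p}(1)$ contributions, and isolating the randomness in $X_{n}$ so Slutsky applies cleanly, is where most of the work lies.
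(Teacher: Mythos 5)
Your proposal is correct and takes essentially the same approach as the paper: explicit diagonalization to reduce $K_\epsilon$ to $\max(k_n^+, k_n^-)$, Taylor expansion of $\log(1-t)$ to extract the dominant $n^\alpha \log n$ scaling, and a Slutsky-type argument to pass to the limit. The paper packages the key estimates into separate lemmas (notably Lemma~\ref{lem:5}, giving $1/(n^\alpha\log|\rho_n|) \to -1/X$, and Lemma~\ref{lem:3} for killing the lower-order terms) and is more careful about restricting to the high-probability event where the Taylor expansion is valid and about the ceiling, but the structure and the key observation---that the extra $\log n$ in $k_n^+$, absent from $k_n^-$, is what makes $k_n^+$ dominate even when $\alpha = \beta$---are identical.
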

\begin{remark}
Although Theorem~\ref{t:main} is about $K_{\epsilon}(M_{n},\vec b)$, we can state a similar theorem for $K^{*}_{\epsilon}(M_{n},\vec b)$. Under the setting of Theorem~\ref{t:main}, for $\alpha\geq \beta>0$, it is reasonable to conjecture
\begin{align*}
    -\frac {K^{*}_{\epsilon}(M_{n}, \vec b)}{\log (\epsilon)n^{\alpha}}\to \max\left\{\frac {1}{X},\frac {1}{Y}\right\}
\end{align*}
in distribution as $n\to \infty$. 
\end{remark}
\subsection{Numerical verification}
\subsubsection{Independent and identially distributed eigenvalues}\label{ex:1}

Let $B$ be an $n\times n$ matrix with independent and identically distributed standard normal entries. Construct an $n\times n$ matrix $A_n$ by
\begin{align*}
    A_n = Q \Lambda Q^T,
\end{align*}
where $Q$ is found by applying the QR factorization to $B$ and
\begin{align*}
    \Lambda = \mathrm{diag}(\lambda_1,\lambda_2,\ldots,\lambda_n),
\end{align*}
where $(\lambda_i)_{i=1}^n$ is a collection of independent and identically distributed (iid) random variables and is uniform on $[-1,1]$. According to \cite{2006math.ph...9050M}, $Q$ is called a Haar orthogonal matrix. With this choice of $A_n$, it follows that $\|A_n\|<1$ almost surely. Therefore, the iteration in Theorem~\ref{t:neumann} converges with probability $1$.
Define
\begin{align*}
     \lambda_{\max,n} &= \max_{1 \leq i \leq n} \lambda_i,\\
    \lambda_{\min,n} &= \min_{1 \leq i \leq n} \lambda_i.
\end{align*}
\begin{definition}
Define $\mathrm{exp}\left(\lambda\right)$ to be the exponential distribution with parameter $\lambda$. The probability density function for a random variable with distribution $\mathrm{exp}\left(\lambda\right)$ is
\begin{align*}
    f(x;\lambda)=\begin{cases} \lambda e^{-\lambda x} & x \geq 0,\\ 0 & x < 0. \end{cases}
\end{align*}
\end{definition}
\begin{proposition}\label{p:m}
Both $n(1-\lambda_{\max,n})$ and $n(1+\lambda_{\min,n})$ converge in distribution to $\mathrm{exp}\left(1/2\right)$ as $n\to \infty$.
\end{proposition}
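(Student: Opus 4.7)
The plan is a direct extreme value theory computation using the uniform distribution. The key observation is that $\lambda_i$ iid uniform on $[-1,1]$ has CDF $F(t) = (t+1)/2$ on $[-1,1]$, so the distributions of the sample max and min factor through powers of $F$ and $1-F$.

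First I would compute the tail of $n(1-\lambda_{\max,n})$ directly. Using independence,
\begin{align*}
\mathbb{P}(n(1-\lambda_{\max,n}) > s) = \mathbb{P}\bigl(\lambda_{\max,n} < 1 - s/n\bigr) = \Bigl(\tfrac{2 - s/n}{2}\Bigr)^n = \Bigl(1 - \tfrac{s}{2n}\Bigr)^n,
\end{align*}
which is valid for $0 \leq s \leq 2n$ and extends to all $s > 0$ by noting the probability is $0$ once $s > 2n$. Taking $n \to \infty$ gives $e^{-s/2}$, which is the tail of $\exp(1/2)$.

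Next I would handle $n(1+\lambda_{\min,n})$ symmetrically. Since $\mathbb{P}(\lambda_i > t) = (1-t)/2$ on $[-1,1]$, independence yields
\begin{align*}
\mathbb{P}(n(1+\lambda_{\min,n}) > s) = \mathbb{P}\bigl(\lambda_{\min,n} > s/n - 1\bigr) = \Bigl(1 - \tfrac{s}{2n}\Bigr)^n \to e^{-s/2}.
\end{align*}
Equivalently one could note that $-\lambda_i$ is again uniform on $[-1,1]$, so $\lambda_{\min,n} = -\max_i(-\lambda_i)$ has the same distributional scaling as $-\lambda_{\max,n}$. In both cases the limiting CDF $1 - e^{-s/2}$ on $[0,\infty)$ matches the $\exp(1/2)$ distribution as defined above, and convergence of the tail probabilities at every $s > 0$ yields convergence in distribution.

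There is no real obstacle here — the only subtlety worth a sentence is that the QR factorization construction of $Q$ is irrelevant for this proposition, since the eigenvalues of $A_n = Q\Lambda Q^T$ are exactly the diagonal entries of $\Lambda$; the orthogonal conjugation only enters when one later wants to apply Theorem~\ref{t:main} to $A_n$ rather than to $\Lambda$ itself. Thus the proposition reduces entirely to the elementary limit $(1 - s/(2n))^n \to e^{-s/2}$.
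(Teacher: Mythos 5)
Your argument is correct and is essentially the paper's: both proofs reduce the claim to the elementary computation $\mathbb{P}(\lambda_{\max,n} \le 1-s/n) = (1 - s/(2n))^n \to e^{-s/2}$ for iid uniforms, with the min handled by the same calculation. The only cosmetic difference is that you work with the survival function $\mathbb{P}(\cdot > s)$ while the paper writes out the CDF $\mathbb{P}(\cdot \le \lambda)$ and separately notes the trivial $\lambda < 0$ case; your closing observation that the QR/Haar conjugation is irrelevant to the eigenvalue distribution is a nice clarification the paper leaves implicit.
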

\begin{proof}
We only show that $n(1-\lambda_{\max ,n})\to\mathrm{exp}\left(1/2\right)$ in distribution as $n\to \infty$. The proof that $n(1+\lambda_{\min,n})\to\mathrm{exp}\left(1/2\right)$ in distribution as $n\to \infty$ follows similarly. When $\lambda \geq 0$, we have
\begin{align*}
    \mathbb P(\lambda_{\max ,n}\leq \lambda)&=\mathbb P(\lambda_1\leq \lambda,\lambda_2\leq \lambda,\ldots,\lambda_n\leq \lambda)\\
    &=\prod_{i=1}^n \mathbb P(\lambda_i\leq \lambda)\\
    &=\left(\frac {\lambda+1}{2}\right)^n.
\end{align*}
Define $\Lambda_{\max, n}:=n(1-\lambda_{\max,n})$, we have
\begin{align*}
    \mathbb P\left(\Lambda_{\max,n}\leq \lambda\right)&=\mathbb P\left(n(1-\lambda_{\max,n})\leq \lambda\right)\\
    &=1-\mathbb P\left(\lambda_{\max,n}<1-\frac {\lambda}{n}\right)\\
    &=1-\left(1+\frac {-\frac {\lambda}{2}}{n}\right)^n.
\end{align*}
Since $e^x=\lim_{n \to \infty}\left(1+x/n\right)^n$, $\lim_{n \to \infty}\mathbb P(\Lambda_{\max,n}\leq \lambda)=1-e^{-\lambda/2}$. When $\lambda<0$, by a similar argument, $\lim_{n \to \infty}\mathbb P(\Lambda_{\max,n}\leq \lambda)=0$. Therefore, $n(1-\lambda_{\max ,n})\to\mathrm{exp}\left(1/2\right)$ in distribution as $n\to \infty$.
\end{proof}
By Theorem~\ref{t:main}, with $\alpha=\beta=1$ and $X,Y\sim \mathrm{exp}(1/2)$, we have
\begin{align*}
    \frac {K_{\epsilon}(A_n,\vec b)}{n \log (n/\epsilon)}\to \frac {1}{X}
\end{align*}
in distribution as $n\to \infty$. Fix $\epsilon=10^{-3}$, Figure~\ref{f:unscaled_runtime1} shows the distribution of $K_{\epsilon}(A_n,\vec b)$ for different values of $n$. Each plot has $10^3$ samples.
\begin{figure}[tbp]
\centering
\includegraphics[width=12cm, height=8cm]{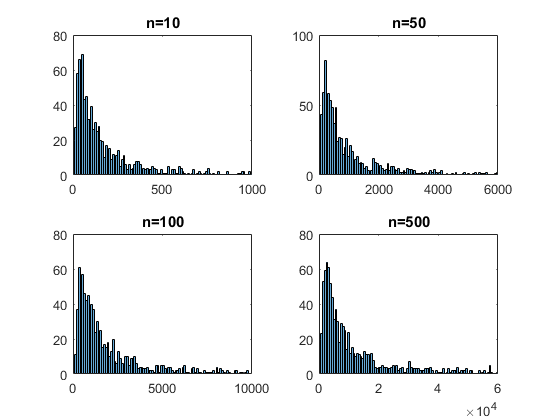}
\caption{Most of the values of $K_{\epsilon}(A_n,\vec b)$ assemble on the left of the plot. When comparing the abscissas of these four plots, we see that as $n$ becomes larger, the range of the values $K_{\epsilon}(A_n,\vec b)$ can achieve also becomes larger.}\label{f:unscaled_runtime1}
\end{figure}
To verify the main theorem, it is equivalent to see if
\begin{align*}
    \frac {n\log (n/\epsilon)}{K_{\epsilon}(A_n,\vec b)}\to X
\end{align*}
in distribution as $n\to \infty$. Figure~\ref{f:scaled_runtime1} shows this convergence. Each plot has $10^3$ samples. In fact, if we decompose $\left(n\log (n/\epsilon)\right)\Big/K_{\epsilon}(A_n,\vec b)$, we find that it involves a term that impedes the speed of convergence. Thus, the convergence in Figure~\ref{f:scaled_runtime1} is quite slow. However, we can improve the speed of convergence. See Appendix~\ref{a:1}.
\begin{figure}[tbp]
\centering
\includegraphics[width=11.5cm, height=7.5cm]{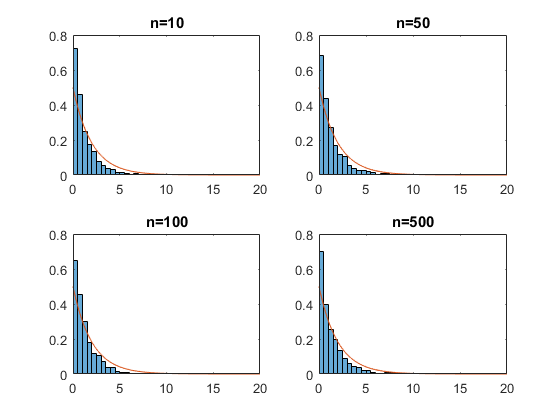}
\caption{The distribution of $\left(n\log(n/\epsilon)\right)\Big/K_{\epsilon}(A_n,\vec b)$ becomes flatter as $n$ becomes larger, which converges to the probability density function for $X\sim \mathrm{exp}(1/2)$. This is a verification of Theorem~\ref{t:main}.}\label{f:scaled_runtime1}
\end{figure}
\begin{remark}
Note that if $K_{\epsilon}(A_n,\vec b)$ is replaced by $k_\epsilon(A_n,\vec b)$, Figure~\ref{f:scaled_runtime1} seems to match better for small values of $n$. However, for $n=10^3$ or larger, the distribution of the actual number of iterations has a heavier tail than the limiting distribution $X\sim \mathrm{exp}(1/2)$. This also appears to hold for the next example.
\end{remark}
\subsubsection{Jacobi unitary ensemble}\label{ex:2} The following definition can be found in \cite[pg.~111]{Forrester10log-gasesand}.
\begin{definition}\label{d:JE}
The Jacobi ensembles are defined as the family of eigenvalue probability density functions proportional to
\begin{align*}
    \prod_{j=1}^{N}(1-\lambda_{j})^{a\beta/2}(1+\lambda_{j})^{b\beta/2}\prod_{1\leq j< k\leq N} \left|\lambda_{k}-\lambda_{j}\right|^{\beta},\quad \lambda_{j}\in [-1,1],
\end{align*}
where $\lambda_{j}$'s are interpreted as eigenvalues, $a,b,N$ are positive integers and $\beta=1,2,$ or $4$. When $\beta=2$ these are referred to as the Jacobi unitary ensembles.
\end{definition}
Let $A=v^{*}v, B=w^{*}w$, where $v$ and $w$ are $n_{1}\times n$ and $n_{2}\times n$ random matrices with entries that are independent and identically distributed standard complex normal random variables. By Proposition $3.6.1$ in \cite[pg.~111]{Forrester10log-gasesand} and Definition~\ref{d:JE}, we know that the eigenvalues $x_{1},x_{2},\cdots,x_{n}$ of the matrix $V=(A+B)^{-1/2}A(A+B)^{-1/2}$ have the joint density function proportional to the probability density function presented in Definition~\ref{d:JE} with
\begin{align*}
N=n,\quad \lambda_{j}=1-2 x_{j},\quad a=n_{1}-n,\quad b=n_{2}-n,\quad \beta=2.
\end{align*}
To verify Theorem~\ref{t:main}, we will therefore focus on $W_n=I-2V$, where $V = (A+B)^{-1/2}A(A+B)^{-1/2}$. We can express the eigenvalue correlations of $W_n$ near $1$ in terms of the Bessel kernel \cite[pg.~1576]{article2}
\begin{align*}
    \mathbb {J}_{\alpha}(u,v)=\frac {J_{\alpha}(\sqrt{u})\sqrt{v}J'_{\alpha}(\sqrt{v})-J_{\alpha}(\sqrt{v})\sqrt{u}J'_{\alpha}(\sqrt{u})}{2(u-v)},
\end{align*}
where $u,v\geq 0$ and $J_{\alpha}$ is the usual Bessel function of the first kind and of order $\alpha$ \cite{DLMF}. Let $\mathbb {J}_{\alpha,s}$ be the integral operator with kernel $\mathbb {J}_{\alpha}(u,v)$ acting on $L^{2}(0,s)$. Then by Corollary 1.2 in \cite[pg.~1578]{article2}, for $s>0$, we have
\begin{align*}
    P_{n}\left(1-\frac {s}{2n^{2}},1\right)\to \text{det}\left(I-\mathbb {J}_{\alpha,s}\right)
\end{align*}
as $n\to \infty$, where $P_{n}(a,b)$ is the probability that there are no eigenvalues in the interval $(a,b)\subset (-1,1)$ and $\text{det}\left(I-\mathbb {J}_{\alpha,s}\right)$ is the Fredholm determinant (see, for example, \cite{article}). By definition, we have
\begin{align*}
    P_{n}\left(1-\frac {s}{2n^{2}},1\right)=\mathbb {P}\left(\lambda_{n}\leq 1-\frac {s}{2n^{2}}\right)=\mathbb {P}\left(n^{2}(1-\lambda_{n})\geq \frac {s}{2}\right).
\end{align*}
Therefore,
\begin{align*}
    \mathbb {P}\left(n^{2}(1-\lambda_{n})\leq \frac {s}{2}\right)\to 1-\text{det}\left(I-\mathbb {J}_{\alpha,s}\right)
\end{align*}
as $n\to \infty$. Let $t=s/2$, we can rewrite it as
\begin{align*}
    \mathbb {P}\left(n^{2}(1-\lambda_{n})\leq t\right)\to 1-\text{det}\left(I-\mathbb {J}_{\alpha,2t}\right)
\end{align*}
as $n\to \infty$. Similarly, we also have
\begin{align*}
    \mathbb {P}\left(n^{2}(\lambda_{1}+1)\leq t\right)\to 1-\text{det}\left(I-\mathbb {J}_{\alpha,2t}\right)
\end{align*}
as $n\to \infty$. Therefore, the assumptions of Theorem~\ref{t:main} are satisfied with $\alpha=\beta=2$. Let $n_{1}=n_{2}=n+2$. Figure~\ref{f:global} shows the global eigenvalue distribution of $W_n$ for different values of $n$. Each plot has $10^3$ samples.
\begin{figure}[tbp]
\centering
\includegraphics[width=11.5cm, height=7.5cm]{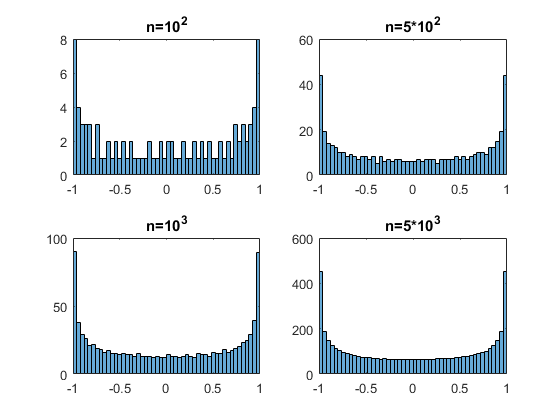}
\caption{We can see that most of the eigenvalues assemble near $-1$ and $1$ and within a small neighbourhood around $0$, the distribution of the eigenvalues is almost uniform.}\label{f:global}
\end{figure}
Consider a quadrature rule \cite[pg.~175]{Gautschi:2011:NA:2208098}
\begin{align*}
    \int_{-1}^{1} f(x)\, dx\approx\sum_{j=1}^{n}f(x_{j})w_{j},
\end{align*}
where $w_{j}$'s are discrete weights and $x_j$'s are the nodes. By a linear transformation, we find
\begin{equation}\label{eq:1}
    \int_{0}^{2t} f(x)\, dx\approx\sum_{j=1}^{n}f\left(t(1+x_{j})\right)\left(tw_{j}\right).
\end{equation}
We use Gauss-Legendre quadrature \cite{Gautschi:2011:NA:2208098}.  We calculate the Bessel kernel $\mathbb J_\alpha(u,v)$ by applying
\begin{align*}
    J'_{\alpha}(x)=J_{\alpha-1}(x)-\frac {\alpha}{x}J_{\alpha}(x)
\end{align*}
and
\begin{align*}
    J''_{\alpha}(x)=J'_{\alpha-1}(x)-\frac {\alpha}{x}J'_{\alpha}(x)+\frac {\alpha}{x^2}J_{\alpha}(x)
\end{align*}
where the last formula is required when $u=v$. Then using the algorithm for calculating the Fredholm determinant from \cite[pg.~874]{article} along with~\eqref{eq:1}, we evaluate the Fredholm determinant and compute the cumulative distribution function $1-\text{det}\left(I-\mathbb {J}_{\alpha,2t}\right)$.
Figure~\ref{f:scaledleig} shows the distribution of $n^{2}(1-\lambda_{n})$ for different values of $n$ and the probability density function of $1-\text{det}\left(I-\mathbb {J}_{\alpha,2t}\right)$, found using a central difference. Each plot has $10^3$ samples. The case for $n^2(\lambda_{1}+1)$ is similar.
\begin{figure}[tbp]
\centering
\includegraphics[width=11.5cm, height=7.5cm]{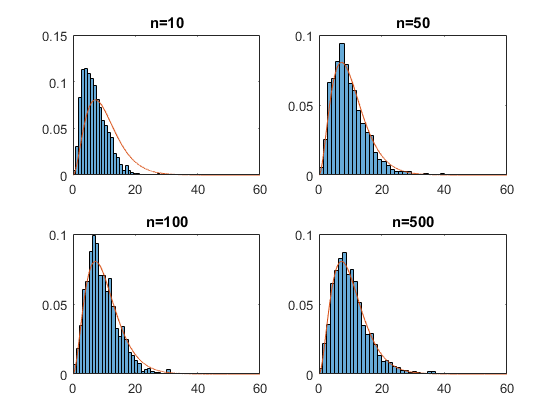}
\caption{As $n$ becomes larger, the distribution of $n^{2}(1-\lambda_{n})$ converges to the probability density function of $1-\text{det}\left(I-\mathbb {J}_{\alpha,2t}\right)$. This is a verification of Corollary 1.2 in \cite[pg.~1578]{article2}.}\label{f:scaledleig}
\end{figure}
Now, we are ready to plot the distribution of $K_{\epsilon}(W_n,\vec b)$. Fix $\epsilon=10^{-3}$, Figure~\ref{f:unruntime} shows the distribution of $K_{\epsilon}(W_n,\vec b)$ for different values of $n$. Each plot has $10^3$ samples.
\begin{figure}[tbp]
\centering
\includegraphics[width=11.5cm, height=7.5cm]{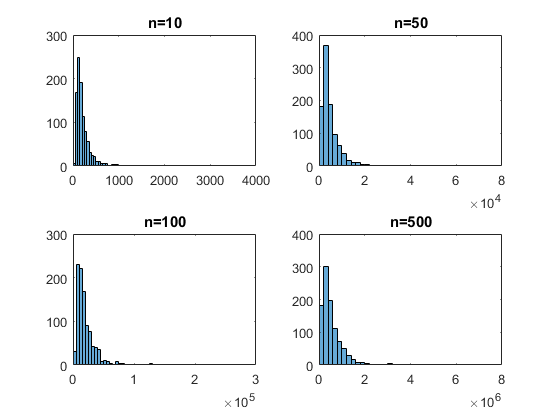}
\caption{Like Figure~\ref{f:unscaled_runtime1}, the values of $K_{\epsilon}(W_n,\vec b)$ assemble on the left of the plot and as $n$ becomes larger, the range of the values $K_{\epsilon}(W_n,\vec b)$ can achieve becomes larger.}\label{f:unruntime}
\end{figure}
Like the previous example, we want to see if $\left(2n^{2}\log(n/\sqrt{\epsilon})\right)\Big/ K_{\epsilon}(W_n,\vec b)$ converges in distribution as $n\to \infty$. Figure~\ref{f:scaledtime} shows this convergence. Each plot has $10^3$ samples.
\begin{figure}[tbp]
\centering
\includegraphics[width=11.5cm, height=7.5cm]{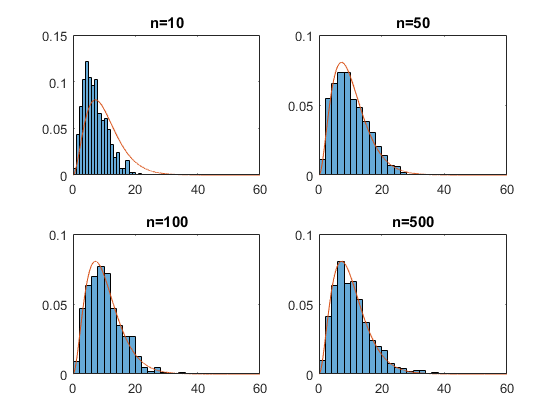}
\caption{As $n$ increases, the distribution of $\left(2n^{2}\log(n/\sqrt{\epsilon})\right)\Big/ K_{\epsilon}(W_n,\vec b)$ converges to the probability density function of $1-\text{det}\left(I-\mathbb {J}_{\alpha,2t}\right)$.}\label{f:scaledtime}
\end{figure}
\section{Lemmas and the proof of Theorem~\ref{t:main}}
In this section, we will first prove some lemmas and then prove the main theorem based on these lemma.
\begin{lemma}\label{lem:1}
Suppose $M_{n}$ is a $n\times n$ symmetric matrix with eigenvalues
\begin{align*}
    -1<\lambda_{1}\leq \lambda_{2}\leq \cdots\leq \lambda_{n}<1.
\end{align*}
Then
\begin{align*}
    \left\|(I-M_{n})^{-1}-\sum_{i=0}^{k} M^{i}_{n}\right\|=\max \left\{\frac {|\lambda_{1}|^{k}}{|1-\lambda_{1}|},\frac {|\lambda_{n}|^{k}}{|1-\lambda_{n}|}\right\}.
\end{align*}
\end{lemma}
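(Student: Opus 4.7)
The plan is straightforward: diagonalize the symmetric matrix and reduce the operator norm to a scalar maximum over its eigenvalues. I would write $M_n = Q\Lambda Q^T$ with $Q$ orthogonal and $\Lambda = \mathrm{diag}(\lambda_1,\ldots,\lambda_n)$. Since $|\lambda_j|<1$ for each $j$, Lemma~\ref{lem:neumann} applied scalar-wise gives $(1-\lambda_j)^{-1} = \sum_{i=0}^\infty \lambda_j^i$, so both $(I-M_n)^{-1}$ and $\sum_{i=0}^k M_n^i$ are simultaneously diagonalized by $Q$. Subtracting yields
\[
(I-M_n)^{-1} - \sum_{i=0}^{k} M_n^i \;=\; Q\,\mathrm{diag}\!\left(\frac{\lambda_j^{k+1}}{1-\lambda_j}\right) Q^T,
\]
a real symmetric matrix whose spectral norm equals $\max_{1\leq j\leq n} |\lambda_j|^{k+1}/(1-\lambda_j)$.

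It then remains to show that this maximum is attained at one of the extreme indices $j=1$ or $j=n$. For $\lambda\in[0,1)$, the map $\lambda\mapsto \lambda^{k+1}/(1-\lambda)$ is non-decreasing, since the numerator is non-decreasing and the denominator is positive and decreasing; thus the maximum over non-negative eigenvalues is attained at $\lambda_n$. For $\lambda\in(-1,0]$, rewriting gives $|\lambda^{k+1}/(1-\lambda)| = |\lambda|^{k+1}/(1+|\lambda|)$, and a short derivative computation shows this is strictly increasing in $|\lambda|$, so the maximum over non-positive eigenvalues is attained at $\lambda_1$. Taking the larger of these two candidates produces the formula in the lemma.

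I anticipate no genuine obstacle; the one-variable monotonicity check is the only step beyond bookkeeping. Worth flagging, however, is an apparent off-by-one in the exponent on the right-hand side: the calculation above naturally produces $|\lambda|^{k+1}$ rather than $|\lambda|^k$. I interpret the exponent $k$ in the stated lemma as reflecting the $\vec x_k = \sum_{i=0}^{k-1}A^i\vec b$ convention of Theorem~\ref{t:neumann}, under which $\|\sum_{i=k}^\infty M_n^i\| = \|(I-M_n)^{-1} - \sum_{i=0}^{k-1} M_n^i\|$ is the object of interest in Definition~\ref{d:K}; the argument above recovers the stated formula verbatim after shifting the partial-sum upper limit by one.
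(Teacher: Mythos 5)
Your proof matches the paper's argument exactly: diagonalize $M_n$, reduce the operator norm to a scalar maximum over the eigenvalues, and use one-variable monotonicity to show the maximum sits at $\lambda_1$ or $\lambda_n$. Your observation about the off-by-one is also correct and worth highlighting: the paper's own proof opens by asserting $\left\|(I-M_{n})^{-1}-\sum_{i=0}^{k} M^{i}_{n}\right\|=\left\|\sum_{i=k}^{\infty} M^{i}_{n}\right\|$, but $\sum_{i=0}^{\infty}-\sum_{i=0}^{k}=\sum_{i=k+1}^{\infty}$, so the partial sum on the left-hand side of the lemma should run to $k-1$ if the stated right-hand side $|\lambda|^{k}/|1-\lambda|$ is to hold. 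As you note, the quantity actually invoked downstream (Definition~\ref{d:K} and the proof of Theorem~\ref{t:main}) is $\bigl\|\sum_{i=k}^{\infty}M_n^i\bigr\|$, and for that quantity the displayed formula is the correct one, so the slip is confined to the lemma's phrasing and the first line of its proof rather than anything that propagates.
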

\begin{proof}
By Lemma~\ref{lem:neumann}, 
\begin{align*}
    \left\|(I-M_{n})^{-1}-\sum_{i=0}^{k} M^{i}_{n}\right\|=\left\|\sum_{i=k}^{\infty} M^{i}_{n}\right\|=\left\|M^{k}_{n}\sum_{i=0}^{\infty} M^{i}_{n}\right\|.
\end{align*}
We decompose $M_{n}$ as $M_{n}=U\Lambda U^{T}$, where $\Lambda$ is a diagonal matrix formed from the eigenvalues of $M_{n}$ and $U$ is a unitary matrix. Thus,
\begin{align*}
    \left\|M^{k}_{n}\sum_{i=0}^{\infty} M^{i}_{n}\right\|=\left\|\begin{bmatrix} 
    \frac {\lambda^{k}_{1}}{1-\lambda_{1}} & \dots & 0 \\
    \vdots & \ddots & \vdots\\
    0 & \dots & \frac {\lambda^{k}_{n}}{1-\lambda_{n}} 
    \end{bmatrix}\right\|=\max \left\{\frac {|\lambda_{1}|^{k}}{|1-\lambda_{1}|},\cdots,\frac {|\lambda_{n}|^{k}}{|1-\lambda_{n}|}\right\}.
\end{align*}
Suppose $-1<\lambda_{1}\leq  \cdots \leq \lambda_{m-1}\leq 0\leq \lambda_{m}\leq\cdots\leq \lambda_{n}<1$. Consider $\lambda_{i}$, where $i=m,\cdots,n$. Since such $\lambda_{i}$'s are all positive and less than 1, we have $ |\lambda_{i}|^{k}/|1-\lambda_{i}|= \lambda^{k}_{i}/\left(1-\lambda_{i}\right)$. Note that $ \lambda^{k}_{i}/\left(1-\lambda_{i}\right)$ is a strictly increasing function of $\lambda_{i}\in (0,1)$ for any positive integer $k$, we have
\begin{align*}
    \max \left\{\frac {|\lambda_{m}|^{k}}{|1-\lambda_{m}|},\cdots,\frac {|\lambda_{n}|^{k}}{|1-\lambda_{n}|}\right\}=\frac {|\lambda_{n}|^{k}}{|1-\lambda_{n}|}.
\end{align*}
Now, consider $\lambda_{i}$, where $i=1,\cdots,m-1$. We want to show that $ |\lambda_{i}|^{k}/|1-\lambda_{i}|$ is a strictly decreasing function of $\lambda_{i}\in (-1,0)$. Note that it is equivalent to show that $ \lambda^{k}_{i}/\left(1+\lambda_{i}\right)$ is a strictly increasing function of $\lambda_{i}\in (0,1)$. Let $g(\lambda_{i})=\lambda^{k}_{i}/\left(1+\lambda_{i}\right)$ and compute
\begin{align*}
    g'(\lambda_{i})&=\frac {k\lambda^{k-1}_{i}(1+\lambda_{i})-\lambda^{k}_{i}}{(1+\lambda_{i})^{2}}\\
    &=\frac {k\lambda^{k}_{i}-\lambda^{k}_{i}+k\lambda^{k-1}_{i}}{(1+\lambda_{i})^{2}}>0.
\end{align*}
Therefore, $|\lambda_{i}|^{k}/|1-\lambda_{i}|$ is a strictly decreasing function of $\lambda_{i}\in (-1,0)$, and we have
\begin{align*}
    \max \left\{\frac {|\lambda_{1}|^{k}}{|1-\lambda_{1}|},\cdots,\frac {|\lambda_{m-1}|^{k}}{|1-\lambda_{m-1}|}\right\}=\frac {|\lambda_{1}|^{k}}{|1-\lambda_{1}|}.
\end{align*}
Therefore, we conclude
\begin{align*}
    \left\|(I-M_{n})^{-1}-\sum_{i=0}^{k} M^{i}_{n}\right\|=\max \left\{\frac {|\lambda_{1}|^{k}}{|1-\lambda_{1}|},\frac {|\lambda_{n}|^{k}}{|1-\lambda_{n}|}\right\}.
\end{align*}
\end{proof}
\begin{lemma}\label{lem:2}
Let\footnote{$\mathbb {R}_{+}=[0,\infty)$.} $f,g:\mathbb {R}_{+}\to \mathbb {R}_{+}$ be strictly decreasing continuous functions such that their graphs intersect at most once. Define
\begin{align*}
    h(x)=\max \left\{f(x),g(x)\right\}.
\end{align*}
Then
\begin{align*}
    h^{-1}(y)&=\max \left\{f^{-1}(y),g^{-1}(y)\right\},\\
    &\min_{x\in \mathbb {R}^{+}}\max\{f(x),g(x)\}\leq y \leq \max_{x\in \mathbb {R}^{+}}\max\{f(x),g(x)\}.
\end{align*}
\end{lemma}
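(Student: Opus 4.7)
The plan is to first verify that $h=\max\{f,g\}$ is itself strictly decreasing and continuous on $\mathbb{R}_{+}$, so that $h^{-1}$ is a genuine single-valued inverse on its range $[\min_x h(x),\max_x h(x)]$. Continuity is immediate from continuity of $f,g$ and of the max operation. For strict monotonicity I would invoke the ``at most one intersection'' hypothesis: the set $\{x: f(x)=g(x)\}$ splits $\mathbb{R}_{+}$ into at most two subintervals, and on each one $h$ coincides with whichever of $f,g$ is strictly larger, both of which are strictly decreasing; at the possible intersection point $h$ takes their common value, so strict monotonicity is preserved across it.

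For a fixed $y$ in the prescribed range, I would verify the identity by direct computation. Swapping the roles of $f$ and $g$ if necessary, assume $f^{-1}(y)\ge g^{-1}(y)$ and set $x^{*}:=f^{-1}(y)$. Then $f(x^{*})=y$ by definition, and since $g$ is strictly decreasing with $x^{*}\ge g^{-1}(y)$ one obtains $g(x^{*})\le g(g^{-1}(y))=y$. Therefore
\[
h(x^{*})=\max\{f(x^{*}),g(x^{*})\}=\max\{y,g(x^{*})\}=y,
\]
and by the strict monotonicity of $h$ already established, $h^{-1}(y)=x^{*}=\max\{f^{-1}(y),g^{-1}(y)\}$.

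The only subtle point I foresee is that a value $y$ in the range of $h$ need not lie in the range of both $f$ and $g$ individually (for instance, if $y>g(0)$ while $y\le f(0)$). To handle this cleanly I would interpret $f^{-1}$ and $g^{-1}$ as the standard generalized inverses, namely $f^{-1}(y):=\inf\{x\ge 0:f(x)\le y\}$, so that $f^{-1}(y)=0$ whenever $y\ge f(0)$. The computation above then goes through verbatim in all boundary cases: if, say, $g^{-1}(y)=0$, one has $h(f^{-1}(y))=\max\{y,g(f^{-1}(y))\}=y$ since $g(f^{-1}(y))\le g(0)\le y$. Beyond this bookkeeping I do not anticipate any genuine technical obstacle.
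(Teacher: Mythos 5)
Your proof is correct, and it takes a genuinely different route from the paper's. The paper argues by case analysis: it splits $\mathbb{R}_{+}$ at the (unique) intersection point $x^{*}$, identifies $h$ as $g$ on one side and $f$ on the other, writes $h^{-1}$ piecewise, and then checks that the piecewise formula agrees with $\max\{f^{-1},g^{-1}\}$; the non-intersecting case is handled separately. You instead verify the identity in one stroke: set $x^{*}=\max\{f^{-1}(y),g^{-1}(y)\}$, say $x^{*}=f^{-1}(y)$, observe $f(x^{*})=y$ and $g(x^{*})\leq g(g^{-1}(y))=y$ by monotonicity, hence $h(x^{*})=y$, and then appeal to injectivity of $h$. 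This is shorter, avoids any branching on where $y$ sits relative to $f(x^{*})$, and---with your generalized-inverse convention---also handles the boundary cases the paper leaves implicit. One further remark: your direct computation does not actually use the ``at most one intersection'' hypothesis at all, and neither does strict monotonicity of $h$. For $x_1<x_2$ one has $f(x_1)>f(x_2)$ and $g(x_1)>g(x_2)$, so $\max\{f(x_1),g(x_1)\}\ge f(x_1)>f(x_2)$ and likewise $>g(x_2)$, giving $h(x_1)>h(x_2)$ unconditionally. So you could drop the interval-splitting argument for monotonicity entirely, and the proof would in fact establish the lemma under weaker hypotheses than the paper states.
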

\begin{proof}
Consider the case where their graphs intersect once. Without loss of generality, assume $f(x^{*})=g(x^{*})$, $x^{*}>0$, and when $x\leq x^{*}$, $g(x)\geq f(x)$; when $x\geq x^{*}$, $f(x)\geq g(x)$. Therefore, by the definition of $h(x)$, we have
\begin{align*}
    h(x)=\begin{cases} g(x), & x\leq x^{*}\\ f(x), & x\geq x^{*} \end{cases}
\end{align*}
As a result, we get
\begin{align*}
    h^{-1}(y)=\begin{cases} g^{-1}(y), & y\geq f(x^{*})\\ f^{-1}(y), & y\leq f(x^{*}) \end{cases}
\end{align*}
Since both $f,g$ are strictly decreasing functions, $f^{-1},g^{-1}$ are also strictly decreasing functions. If $x\leq x^{*}$, then $f(x)\leq g(x)$. Therefore, $g^{-1}(f(x))\geq x=f^{-1}(f(x))$. In other words, if $y=f(x)$ and $x\leq x^{*}$, we have $y=f(x)\geq f(x^{*})$ and $f^{-1}(y)\leq g^{-1}(y)$. Similarly, if $x\geq x^{*}$, then $f(x)\geq g(x)$. Therefore, $g^{-1}(f(x))\leq x=f^{-1}(f(x))$. In other words, if $y=f(x)$ and $x\geq x^{*}$, we have $y=f(x)\leq f(x^{*})$ and $g^{-1}(y)\leq f^{-1}(y)$. Thus, we have
\begin{align*}
    h^{-1}(y)=\max \left\{f^{-1}(y),g^{-1}(y)\right\}.
\end{align*}
If their graphs do not intersect, without loss of generality, assume $f(x)>g(x)$, $x>0$. Then
\begin{align*}
    h(x)=f(x)=\max \left\{f(x),g(x)\right\}.
\end{align*}
Therefore, $f^{-1}(g(x))>x=g^{-1}(g(x))$. In other words, if $y=g(x)$, we have $f^{-1}(y)>g^{-1}(y)$. Thus,
\begin{align*}
    h^{-1}(y)=f^{-1}(y)=\max \left\{f^{-1}(y),g^{-1}(y)\right\}.
\end{align*}
\end{proof}
Based on Lemma~\ref{lem:1}, to find an expression for $K_{\epsilon}$, set both $|\lambda_{1}|^{k}/|1-\lambda_{1}|$ and $|\lambda_{n}|^{k}/|1-\lambda_{n}|$ equal to $\epsilon$.
\begin{definition}\label{d:k}
Define
\begin{align*}
    k_{1}&=\frac {\log{\epsilon}+\log{|1-\lambda_{1}|}}{\log{|\lambda_{1}|}},\\
    k_{n}&=\frac {\log{\epsilon}+\log{|1-\lambda_{n}|}}{\log{|\lambda_{n}|}}.
\end{align*}
\end{definition}
Based on Lemma~\ref{lem:2},
\begin{align*}
    K_{\epsilon}(M_{n})&=\max \left\{\frac {\log{\epsilon}+\log{|1-\lambda_{1}|}}{\log{|\lambda_{1}|}},\frac {\log{\epsilon}+\log{|1-\lambda_{n}|}}{\log{|\lambda_{n}|}}\right\}+\sigma\\
    &=\max \left\{k_{1},k_{n}\right\}+\sigma,
\end{align*}
\DeclarePairedDelimiter{\ceil}{\lceil}{\rceil}
where\footnote{$\ceil[\big]{\cdot}$ here denotes the ceiling function.} $\sigma=\ceil[\big]{\max\{k_{1},k_{n}\}}-\max \{k_{1},k_{n}\}$. 
\begin{definition}
A sequence $(X_n)_{n \geq 0}$ of random variables converge to zero in probability if for every $\epsilon> 0$
\begin{align*}
    \lim_{n \to \infty} \mathbb P(|X_n| > \epsilon ) = 0.
\end{align*}
\end{definition}
\begin{lemma}\label{lem:3}
Suppose a sequence $(X_n)_{n \geq 0}$ of random variables converge in distribution to a random variable $X$. Suppose further that another sequence $(Y_n)_{n \geq 0}$ of random variables converge in probability to 0. Then $X_{n}Y_{n}$ converges in probability to 0.
\end{lemma}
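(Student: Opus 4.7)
The plan is to bound $\mathbb P(|X_nY_n|>\epsilon)$ by splitting on the event that $|X_n|$ is controlled: using a truncation level $M>0$, the elementary inclusion
\begin{align*}
    \{|X_nY_n|>\epsilon\}\subseteq \{|X_n|>M\}\cup\{|Y_n|>\epsilon/M\}
\end{align*}
gives $\mathbb P(|X_nY_n|>\epsilon)\leq \mathbb P(|X_n|>M)+\mathbb P(|Y_n|>\epsilon/M)$. The second term tends to $0$ as $n\to\infty$ by the hypothesis that $Y_n\to 0$ in probability. So the task reduces to showing that the first term can be made uniformly small in $n$ by taking $M$ large, which is exactly the \emph{tightness} of a sequence that converges in distribution.

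For the tightness step I would proceed as follows. Fix $\eta>0$. Since $X$ is an $\mathbb R$-valued random variable, its law is tight: there exist continuity points $-M$ and $M$ of $F_X$ with $M>0$ such that $F_X(M)-F_X(-M)>1-\eta/2$, i.e.\ $\mathbb P(|X|>M)<\eta/2$. Because $X_n\to X$ in distribution and $\pm M$ are continuity points of $F_X$, we have $F_{X_n}(M)-F_{X_n}(-M)\to F_X(M)-F_X(-M)$, so for all $n$ sufficiently large, $\mathbb P(|X_n|>M)<\eta$.

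Putting the two pieces together, given $\epsilon>0$ and $\eta>0$, choose $M$ as above, then pick $N$ large enough so that both $\mathbb P(|X_n|>M)<\eta$ (from the tightness step) and $\mathbb P(|Y_n|>\epsilon/M)<\eta$ (from $Y_n\to 0$ in probability) hold for all $n\geq N$. This yields $\mathbb P(|X_nY_n|>\epsilon)<2\eta$ for $n\geq N$, and since $\eta$ was arbitrary, $X_nY_n\to 0$ in probability.

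I do not expect any serious obstacles here; the only subtlety is choosing $M$ to be a continuity point of $F_X$ (or equivalently $\pm M$ continuity points of $F_X$) so that convergence in distribution actually yields the desired inequality for $\mathbb P(|X_n|>M)$. This is a standard application of the portmanteau/continuity-point definition of convergence in distribution, and can be arranged because $F_X$ has at most countably many discontinuities. The rest is the union bound above.
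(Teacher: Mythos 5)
Your proposal is correct and takes essentially the same approach as the paper: the inclusion $\{|X_nY_n|>\epsilon\}\subseteq\{|X_n|>M\}\cup\{|Y_n|>\epsilon/M\}$ is the same decomposition the paper uses (with $\delta_k=\epsilon/M$), and both arguments then invoke convergence in distribution at a continuity point of $F_{|X|}$ to control $\mathbb P(|X_n|>M)$ before sending $M\to\infty$ (equivalently $\delta_k\to 0^+$). The only cosmetic difference is that you phrase the first term via tightness and an $\eta$/$N$ argument, whereas the paper takes $\limsup_n$ first and then lets $\delta_k\to 0^+$; these are the same estimate in a different order.
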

\begin{proof}
We want to show that $\lim_{n\to \infty}\mathbb {P}(|X_{n}Y_{n}|> \epsilon)=0$ for every $\epsilon>0$. The cumulative distribution function of $X$ has an, at most, countable number of discontinuities. We can choose $(\delta_{k})_{k\geq 1}$, where $\delta_k\in (0,1/k)$ and $k\geq 1$ as the sequence converging to $0$ such that for all $k$, $\epsilon/\delta_{k}$ is a point of continuity for $F_{|X|}$. This is possible since $X/\epsilon$ has an, at most, countable number of discontinuities. We have
\begin{align*}
    \mathbb {P}(|X_{n}Y_{n}|> \epsilon)=\mathbb {P}(|X_{n}Y_{n}|> \epsilon,|Y_{n}|\leq \delta_k)+\mathbb {P}(|X_{n}Y_{n}|> \epsilon,|Y_{n}|>\delta_k).
\end{align*}
Given that $|X_{n}Y_{n}|> \epsilon$ and $|Y_{n}|\leq \delta_k$, we have $|X_{n}|>\epsilon/\delta_k$. Since $\mathbb {P}(|X_{n}Y_{n}|> \epsilon,|Y_{n}|>\delta_k)\leq \mathbb {P}(|Y_{n}|>\delta_k)$, we have
\begin{align*}
    \mathbb {P}(|X_{n}Y_{n}|> \epsilon)\leq \mathbb {P}\left(|X_{n}|> \frac {\epsilon}{\delta_k}\right)+\mathbb {P}(|Y_{n}|>\delta_k).
\end{align*}
Since $\epsilon/\delta_k$ is a point of continuity we know that $\lim_{n\to \infty}\mathbb {P}(|Y_{n}|>\delta_k)=0$, we have for all $k$
\begin{align*}
    \limsup_{n\to \infty}\mathbb {P}(|X_{n}Y_{n}|>\epsilon)\leq \limsup_{n\to \infty}\mathbb {P}\left(|X_{n}|>\frac {\epsilon}{\delta_k}\right)=\mathbb {P}\left(|X|>\frac {\epsilon}{\delta_k}\right).
\end{align*}
By taking $\delta_k \to 0^{+}$ along $(\delta_k)_{k\geq 1}$, we have
\begin{align*}
    \lim_{n\to \infty}\mathbb {P}(|X_{n}Y_{n}|>\epsilon)=0.
\end{align*}
\end{proof}
The following lemma is from \cite[pg.~105]{Durrett2010}.
\begin{lemma}[Converging together lemma]\label{lem:4}
Suppose a sequence $(X_n)_{n \geq 0}$ of random variables converge in distribution to a random variable $X$ as $n\to \infty$.  Suppose further that there is another sequence $(Y_n)_{n \geq 0}$ of random variables such that $Y_n - X_n$ converges to zero in probability as $n\to \infty$.  Then $Y_{n}$ converges to $X$ in distribution as $n\to \infty$.
\end{lemma}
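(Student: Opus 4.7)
The goal is to show $F_{Y_n}(t) \to F_X(t)$ at every continuity point $t$ of $F_X$. The plan is to sandwich $\mathbb{P}(Y_n \leq t)$ between two shifted probabilities $\mathbb{P}(X_n \leq t \pm \epsilon)$ up to an error that vanishes by the hypothesis $Y_n - X_n \to 0$ in probability, then let $\epsilon \to 0$ along a sequence that avoids the countable set of discontinuities of $F_X$.

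First, I would fix a continuity point $t$ of $F_X$ and an arbitrary $\epsilon > 0$. For the upper bound, I would decompose
\begin{align*}
\mathbb{P}(Y_n \leq t) &= \mathbb{P}(Y_n \leq t, |Y_n - X_n| \leq \epsilon) + \mathbb{P}(Y_n \leq t, |Y_n - X_n| > \epsilon) \\
&\leq \mathbb{P}(X_n \leq t + \epsilon) + \mathbb{P}(|Y_n - X_n| > \epsilon),
\end{align*}
where on the first event I used $X_n \leq Y_n + \epsilon \leq t + \epsilon$. For the lower bound, I would instead start from
\begin{align*}
\mathbb{P}(X_n \leq t - \epsilon) \leq \mathbb{P}(Y_n \leq t) + \mathbb{P}(|Y_n - X_n| > \epsilon),
\end{align*}
arguing symmetrically that on $\{|Y_n - X_n| \leq \epsilon\}$ the inequality $X_n \leq t - \epsilon$ forces $Y_n \leq t$. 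Taking $n \to \infty$ and using the hypothesis that $\mathbb{P}(|Y_n - X_n| > \epsilon) \to 0$, I obtain
\begin{align*}
\liminf_{n\to\infty}\mathbb{P}(X_n \leq t - \epsilon) \leq \liminf_{n\to\infty}\mathbb{P}(Y_n \leq t) \leq \limsup_{n\to\infty}\mathbb{P}(Y_n \leq t) \leq \limsup_{n\to\infty}\mathbb{P}(X_n \leq t + \epsilon).
\end{align*}

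The mildly delicate point is that I can only invoke $X_n \to X$ in distribution at continuity points of $F_X$. To handle this, I would choose $\epsilon$ from a sequence $(\epsilon_k)_{k\geq 1}$ with $\epsilon_k \downarrow 0$ such that each $t + \epsilon_k$ and each $t - \epsilon_k$ is a continuity point of $F_X$; this is possible since $F_X$ has at most countably many discontinuities. Then the displayed inequality becomes
\begin{align*}
F_X(t - \epsilon_k) \leq \liminf_{n\to\infty}\mathbb{P}(Y_n \leq t) \leq \limsup_{n\to\infty}\mathbb{P}(Y_n \leq t) \leq F_X(t + \epsilon_k).
\end{align*}
Letting $k \to \infty$ and using continuity of $F_X$ at the chosen point $t$ forces both outer quantities to $F_X(t)$, and I conclude $\mathbb{P}(Y_n \leq t) \to F_X(t)$.

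I do not expect any serious obstacle here: the only subtlety is the continuity-point bookkeeping, which is standard. The proof is essentially a two-sided $\epsilon$-shift argument, and once the decomposition is written down, everything else follows from the definitions of convergence in distribution and in probability together with countability of the discontinuity set of a monotone function.
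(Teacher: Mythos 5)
Your proof is correct and follows essentially the same two-sided $\epsilon$-shift argument as the paper: decompose on the event $\{|Y_n - X_n| \le \epsilon\}$, sandwich $\mathbb{P}(Y_n \le t)$ between $F_X(t\pm\epsilon)$, and send $\epsilon \to 0^+$ along a sequence of continuity points of $F_X$. If anything, your write-up is slightly cleaner, as the paper's lower-bound display contains a typo ($\mathbb{P}(Y_n \le \epsilon)$ where $\mathbb{P}(Y_n \le x)$ is meant) that you avoid.
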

\begin{proof}
Let $F_{X_n}$ be the cumulative distribution function of $X_{n}$ and $F_X$ the cumulative distribution function of $X$. Let $x$ be a continuity point of $F_X$ and $\epsilon>0$. For the upper bound on $Y_{n}$,
\begin{align*}
    \mathbb {P}(Y_{n}\leq x)&=\mathbb {P}\left(Y_{n}\leq x,\left|Y_{n}-X_{n}\right|\leq \epsilon\right)+\mathbb {P}\left(Y_{n}\leq x,\left|Y_{n}-X_{n}\right|> \epsilon\right)\\
    &\leq \mathbb {P}\left(X_{n}\leq x+\epsilon\right)+\mathbb {P}\left(\left|Y_{n}-X_{n}\right|> \epsilon\right).
\end{align*}
Since $\lim_{n\to \infty}\mathbb {P}\left(\left|Y_{n}-X_{n}\right|> \epsilon\right)=0$ and if $x+\epsilon$ is a continuity point of $F_X$, 
\begin{align*}
    \mathbb {P}\left(X_{n}\leq x+\epsilon\right)=F_{X_n}\left(x+\epsilon\right)\to F_X\left(x+\epsilon\right)
\end{align*}
in distribution as $n\to \infty$. Therefore,
\begin{align*}
    \limsup_{n\to \infty}\mathbb {P}\left(Y_{n}\leq x\right)\leq F_X(x+\epsilon)
\end{align*}
for all $\epsilon$ such that $x+\epsilon$ is a continuity point of $F_X$. Such an $\epsilon$ exists since $F_X$ has an, at most, countable number of discontinuities. And indeed there must exist a sequence of choices of $\epsilon>0$ such that $\epsilon\to 0^+$ along this sequence. Take $\epsilon \to 0^{+}$ along this sequence,
\begin{align*}
    \limsup_{n\to \infty}\mathbb {P}\left(Y_{n}\leq x\right)\leq F_X(x).
\end{align*}
For the lower bound on $Y_n$,
\begin{align*}
    \mathbb {P}\left(X_{n}\leq x-\epsilon\right)&=\mathbb {P}\left(X_{n}\leq x-\epsilon,\left|Y_{n}-X_{n}\right|\leq \epsilon\right)+\mathbb {P}\left(X_{n}\leq x-\epsilon,\left|Y_{n}-X_{n}\right|> \epsilon\right)\\
    &\leq \mathbb {P}\left(Y_{n}\leq \epsilon\right)+\mathbb {P}\left(\left|Y_{n}-X_{n}\right|> \epsilon\right).
\end{align*}
If $x-\epsilon$ is a continuity point of $F_X$, 
\begin{align*}
    \liminf_{n\to \infty}\mathbb {P}\left(Y_{n}\leq \epsilon\right)\geq F_X(x-\epsilon).
\end{align*}
for all $\epsilon$ such that $x-\epsilon$ is a continuity point of $F_X$. Such $\epsilon$ exists since that $F_X$ has an, at most, countable number of discontinuities. And indeed there must exist a sequence of choices of $\epsilon>0$ such that $\epsilon\to 0^+$ along this sequence. Again, take $\epsilon\to 0^{+}$ along this sequence,
\begin{align*}
    \liminf_{n\to \infty}\mathbb {P}\left(Y_{n}\leq \epsilon\right)\geq F_X(x).
\end{align*}
These two bounds imply
\begin{align*}
    \lim_{n\to \infty}\mathbb {P}\left(Y_{n}\leq x\right)=F_X(x).
\end{align*}
\end{proof}
\begin{lemma}\label{lem:5}
Suppose a sequence $(X_n)_{n \geq 0}$ of random variables converge in distribution to a random variable $X$ as $n\to \infty$ and $X,X_{n}>0$ almost surely. Set $\rho_n = 1 - X_n/n^{\alpha}$ and $\alpha>0$. Then
\begin{align*}
    \frac{1}{n^{\alpha} \log |\rho_n|}
\end{align*}
converges in distribution to $-1/X$ as $n\to \infty$.
\end{lemma}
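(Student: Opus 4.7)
The plan is to apply Lemma~\ref{lem:4} (converging together) with $Y_n = 1/(n^\alpha \log|\rho_n|)$ and a comparison sequence $Z_n = -1/X_n$. Since $X_n \to X$ in distribution and $x \mapsto -1/x$ is continuous on $(0,\infty)$, where $X$ and all $X_n$ live almost surely, the continuous mapping theorem gives $Z_n \to -1/X$ in distribution. It then suffices to show $Y_n - Z_n \to 0$ in probability.

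First I would verify that $Y_n$ is well defined on an event of probability tending to one, and that $|\rho_n|=\rho_n$ there. Because $(X_n)$ converges in distribution it is tight, so $\mathbb{P}(X_n \geq n^\alpha) \to 0$; on the complementary event $\rho_n\in(0,1)$ and $\log\rho_n$ is a well-defined negative number. The same tightness plus $n^\alpha\to\infty$ shows that $t_n := X_n/n^\alpha$ converges to $0$ in probability, which is the input I really need for the next step.

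The analytic heart of the argument is a one-line Taylor expansion. Write
\begin{align*}
    \log(1-t) = -t\bigl(1 + r(t)\bigr),\qquad r(t) = \sum_{j \geq 2} \frac{t^{j-1}}{j},
\end{align*}
so that $r$ is continuous at $0$ with $r(0)=0$. Multiplying by $n^\alpha$ gives $n^\alpha \log\rho_n = -X_n\bigl(1 + r(t_n)\bigr)$, hence
\begin{align*}
    Y_n - Z_n = -\frac{1}{X_n(1+r(t_n))} + \frac{1}{X_n} = \frac{1}{X_n}\cdot\frac{r(t_n)}{1+r(t_n)}.
\end{align*}

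To finish, since $t_n\to 0$ in probability and $r$ is continuous at $0$, the continuous mapping theorem (for convergence in probability) gives $r(t_n)/(1+r(t_n)) \to 0$ in probability, while $1/X_n \to 1/X$ in distribution (again by continuous mapping, using $X>0$ almost surely). Lemma~\ref{lem:3} applied to this pair yields $Y_n - Z_n \to 0$ in probability, and Lemma~\ref{lem:4} then upgrades $Z_n \to -1/X$ to $Y_n \to -1/X$ in distribution. The main obstacle I anticipate is purely bookkeeping: justifying the continuous-mapping step for the composition $r(t_n)$ and absorbing the vanishing-probability bad set $\{X_n \geq n^\alpha\}$ on which the Taylor expansion is not literally valid; no deeper estimate seems required.
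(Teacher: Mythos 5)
Your argument is correct and closely parallels the paper's: both reduce via Lemma~\ref{lem:4} to showing $1/(n^\alpha\log|\rho_n|) - (-1/X_n) \to 0$ in probability, and both drive the estimate with a Taylor expansion of $\log(1-t)$ about $t=0$. The difference is in how the expansion is packaged. The paper writes $\log(1-x) = -x - x^2/(1-\xi)^2$ (Lagrange remainder), pairs this with the elementary lower bound $|\log\rho_n| \geq |\rho_n-1|/2$ on the event $A_n = \{|X_n/n^\alpha| \leq 1/2\}$, and thereby obtains the \emph{deterministic} bound $Y_n \leq 8/n^\alpha$ on $A_n$, so the conclusion follows once $\mathbb{P}(A_n)\to 1$ (which the paper gets from Lemma~\ref{lem:3}). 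You instead factor $\log(1-t) = -t(1+r(t))$, which leaves the random coefficient $1/X_n$ in the difference $Y_n - Z_n = (1/X_n)\cdot r(t_n)/(1+r(t_n))$, and you invoke Lemma~\ref{lem:3} at the \emph{end} to control the product of a weakly convergent factor with one vanishing in probability. Both routes pass through the same two lemmas and the same small-parameter asymptotics; yours is slightly more modular (continuous mapping absorbs the analysis), while the paper's is more explicit (a clean rate $8/n^\alpha$). The bookkeeping you flag --- restricting to the event $t_n \in (0,1)$ so that the power series converges and $|\rho_n|=\rho_n$ --- is precisely what the paper's event $A_n$ accomplishes, and is indeed routine.
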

\begin{proof}
By Lemma~\ref{lem:4}, it is equivalent to show that $1/n^{\alpha} \log |\rho_n|$ converges in probability to $-1/X_{n}$. Let 
\begin{align*}
    Y_{n}=\left|\frac{1}{n^{\alpha} \log |\rho_n|}+\frac {1}{X_{n}}\right|,
\end{align*}
then we want to show that $\lim_{n \to \infty} \mathbb P(Y_n > \epsilon ) = 0$ for every $\epsilon>0$. Let 
\begin{align*}
    A_{n}=\left\{\left|1-\rho_{n}\right|\leq \frac {1}{2}\right\}=\left\{\left|\frac {X_{n}}{n^{\alpha}}\right|\leq \frac {1}{2}\right\},
\end{align*}
then by Lemma~\ref{lem:3}, $\lim_{n \to \infty} \mathbb {P}(A_{n})=1$. Therefore,
\begin{align*}
    \limsup_{n\to \infty}\mathbb {P}(Y_{n}> \epsilon)&\leq\limsup_{n \to \infty}\mathbb {P}(Y_{n}> \epsilon,A_{n})+\limsup_{n\to \infty}\mathbb {P}(Y_{n}> \epsilon,A^\complement_{n})\\
    &=\limsup_{n \to \infty}\mathbb {P}(Y_{n}> \epsilon,A_{n})
\end{align*}
since $\limsup_{n\to \infty}\mathbb {P}(Y_{n}> \epsilon,A^\complement_{n})\leq \limsup_{n\to \infty}\mathbb {P}(A^\complement_{n})=0$. Thus, we want to show that $\limsup_{n \to \infty}\mathbb {P}(Y_{n}> \epsilon,A_{n})=0$. Let $-1/2\leq x=X_{n}/n^{\alpha}\leq 1/2$, then $\log|1-x|=\log(1-x)$. By Taylor's Theorem,
\begin{align*}
    \log(1-x)=-x-\frac {1}{(1-\xi)^{2}}x^{2},
\end{align*}
where $\xi \in [-\frac {1}{2},\frac {1}{2}]$. Since $f(\xi)=1/(1-\xi)^{2}$ is a strictly increasing function of $\xi$, we have
\begin{align*}
     \left|\log(1-x)+x\right|\leq 4x^{2}.
\end{align*}
Using $x=X_{n}/n^{\alpha}$ and multiplying both sides by $n^{\alpha}$, we find
\begin{align*}
    \left|n^{\alpha}\log \rho_{n}+ X_{n}\right|\leq 4\left(\frac {X^{2}_{n}}{n^{\alpha}}\right).
\end{align*}
When $1/2< x\leq 3/2$, $|\log x|\geq |x-1|/2$. Then $|\log \rho_{n}|\geq |\rho_{n}-1|/2$ on $A_{n}$ since $1/2\leq \rho_{n}\leq 3/2$, and we have 
\begin{align*}
    \left|n^{\alpha}(\log \rho_{n})X_{n}\right|\geq \frac {X^{2}_{n}}{2}.
\end{align*}
Therefore on $A_{n}$,
\begin{align*}
    Y_{n}=\left|\frac{1}{n^{\alpha} \log |\rho_n|}+\frac {1}{X_{n}}\right|=\left|\frac {n^{\alpha}\log \rho_{n}+X_{n}}{n^{\alpha}(\log \rho_{n})X_{n}}\right|\leq \frac {4X^{2}_{n}/n^{\alpha}}{X^{2}_{n}/2}=\frac {8}{n^{\alpha}},
\end{align*}
and
\begin{align*}
    \limsup_{n \to \infty}\mathbb {P}(Y_{n}> \epsilon,A_{n})\leq \limsup_{n\to \infty}\mathbb {P}\left(\frac {8}{n^{\alpha}}>\epsilon,A_{n}\right)=0.
\end{align*}
Thus, we conclude that
\begin{align*}
    \frac {1}{n^{\alpha}\log |\rho_{n}|}\to -\frac {1}{X}
\end{align*}
in distribution as $n\to \infty$.
\end{proof}
\begin{proof}[Proof of Theorem~\ref{t:main}]
Recall Definition~\ref{d:k}:
\begin{align*}
    k_{n}=\frac {\log{\epsilon}+\log{|1-\lambda_{n}|}}{\log{|\lambda_{n}|}}.
\end{align*}
Given that $\lambda_{n}=1-\xi_{n}/n^{\alpha}$, where $\xi_{n}>0$ and $\xi_{n}\to X$ in distribution as $n\to \infty$, we have
\begin{align*}
    k_{n}=\frac {-\alpha \log n+\log \epsilon \xi_{n}}{\log \left|1-\frac {\xi_{n}}{n^{\alpha}}\right|}=\frac {-\alpha \log (n/\epsilon^{1/\alpha})}{\log \left|1-\frac {\xi_{n}}{n^{\alpha}}\right|}+\frac {\log \xi_{n}}{\log \left|1-\frac {\xi_{n}}{n^{\alpha}}\right|}.
\end{align*}
Let
\begin{align*}
    \widetilde {k}_{n}=\frac {-\alpha \log (n/\epsilon^{1/\alpha})}{\log \left|1-\frac {\xi_{n}}{n^{\alpha}}\right|}.
\end{align*}
By Lemma~\ref{lem:5}, we know that
\begin{align*}
    \frac {\widetilde {k}_{n}}{\alpha \log (n/\epsilon^{1/\alpha})n^{\alpha}}=-\frac {1}{n^{\alpha}\log \left|1-\frac {\xi_{n}}{n^{\alpha}}\right|}\to \frac {1}{X}
\end{align*}
in distribution as $n\to \infty$. Moreover, by Lemma~\ref{lem:3},
\begin{align*}
    \frac {\log \xi_{n}}{\alpha \log (n/\epsilon^{1/\alpha})}\to 0
\end{align*}
in probability as $n\to \infty$, and therefore,
\begin{align*}
    \frac {\widetilde {k}_{n}-k_{n}}{\alpha \log (n/\epsilon^{1/\alpha})n^{\alpha}}=\frac {\log \xi_{n}}{\alpha \log (n/\epsilon^{1/\alpha})}\cdot \left(-\frac {1}{n^{\alpha}\log \left|1-\frac {\xi_{n}}{n^{\alpha}}\right|}\right)\to 0
\end{align*}
in probability as $n\to \infty$ by Lemma~\ref{lem:3}. Finally, by Lemma~\ref{lem:4}, we find
\begin{align*}
    \frac {k_{n}}{\alpha \log (n/\epsilon^{1/\alpha})n^{\alpha}}\to \frac {1}{X}
\end{align*}
in distribution as $n\to \infty$. Similarly, in Definition~\ref{d:k}, recall
\begin{align*}
    k_{1}=\frac {\log{\epsilon}+\log{|1-\lambda_{1}|}}{\log{|\lambda_{1}|}}.
\end{align*}
Given that $\lambda_{1}=-1+\xi_{1}/n^{\beta}$, where $\xi_{1}>0$ and $\xi_{1}\to Y$ in distribution as $n\to \infty$, we write
\begin{align*}
    k_{1}=\frac {\log \left(2-\frac {\xi_{1}}{n^{\beta}}\right)\epsilon}{\log \left|1-\frac {\xi_{1}}{n^{\beta}}\right|}.
\end{align*}
Let $\zeta_{1}=2-\xi_{1}/n^{\beta}$, then $\zeta_{1}\to 2$ in probability. Thus, by Lemma~\ref{lem:3}, we have
\begin{align*}
    \frac {\log \zeta_{1}\epsilon}{\log (n/\epsilon^{1/\beta})}\to 0
\end{align*}
in probability as $n\to \infty$. By Lemma~\ref{lem:5}, we have
\begin{align*}
    \frac {1}{n^{\beta}\log \left|1-\frac {\xi_{n}}{n^{\beta}}\right|}\to -\frac {1}{X}
\end{align*}
in distribution as $n\to \infty$. Therefore,
\begin{align*}
    \frac {k_{1}}{\log (n/\epsilon^{1/\beta})n^{\beta}}&=\frac {\log \zeta_{1}\epsilon}{\log (n/\epsilon^{1/\beta})n^{\beta}\log \left|1-\frac {\xi_{n}}{n^{\beta}}\right|}\\
    &=\left(\frac {\log \zeta_{1}\epsilon}{\log (n/\epsilon^{1/\beta})}\right)\cdot \left(\frac {1}{n^{\beta}\log \left|1-\frac {\xi_{n}}{n^{\beta}}\right|}\right)\to 0
\end{align*}
in probability as $n\to \infty$ by Lemma~\ref{lem:3}. Given that $\alpha\geq \beta>0$, we have
\begin{align*}
    \frac {K_{\epsilon}(M_{n})}{\alpha \log (n/\epsilon^{1/\alpha})n^{\alpha}}=\max \left\{\frac {k_{1}}{\alpha \log (n/\epsilon^{1/\alpha})n^{\alpha}},\frac {k_{n}}{\alpha \log (n/\epsilon^{1/\alpha})n^{\alpha}}\right\}+\frac {\sigma}{\alpha \log (n/\epsilon^{1/\alpha})n^{\alpha}}.
\end{align*}
Let $1/X_{n}:=k_{n}/\alpha \log (n/\epsilon^{1/\alpha})n^{\alpha}\to 1/X$ in distribution as $n\to \infty$ and $Y_{n}:=k_{1}/\alpha \log (n/\epsilon^{1/\alpha})n^{\alpha}\to 0$ in probability as $n\to \infty$. Fix $0<\epsilon<1/2$ and define
\begin{align*}
    A_{n,\epsilon}=\left\{Y_{n}>\epsilon\right\}.
\end{align*}
We have
\begin{align*}
    \mathbb {P}\left(Y_{n}>\frac {1}{X_{n}}\right)=\mathbb {P}\left(Y_{n}>\frac {1}{X_{n}},A_{n,\epsilon}\right)+\mathbb {P}\left(Y_{n}>\frac {1}{X_{n}},A^{\complement}_{n,\epsilon}\right).
\end{align*}
Applying $\limsup_{n\to \infty}$ we find
\begin{align*}
    \limsup_{n\to \infty}\mathbb {P}\left(Y_{n}>\frac {1}{X_{n}}\right)\leq \limsup_{n\to \infty}\mathbb {P}\left(Y_{n}>\frac {1}{X_{n}},A_{n,\epsilon}\right)+\limsup_{n\to \infty}\mathbb {P}\left(Y_{n}>\frac {1}{X_{n}},A^{\complement}_{n,\epsilon}\right).
\end{align*}
Then
\begin{align*}
    \limsup_{n\to \infty}\mathbb {P}\left(Y_{n}>\frac {1}{X_{n}},A_{n,\epsilon}\right)\leq \limsup_{n\to \infty}\mathbb {P}\left(A_{n,\epsilon}\right)=0,
\end{align*}
and since $1/X_{n}\to 1/X$ in distribution as $n\to \infty$,
\begin{align*}
    \limsup_{n\to \infty}\mathbb {P}\left(Y_{n}>\frac {1}{X_{n}}\right)&\leq \limsup_{n\to \infty}\mathbb {P}\left(Y_{n}>\frac {1}{X_{n}},A^{\complement}_{n,\epsilon}\right)\\
    &=\limsup_{n\to \infty}\mathbb {P}\left(\frac {1}{X_{n}}<\epsilon\right)\\
    &\leq F_{1/X}(\epsilon).
\end{align*}
 Therefore, applying $\lim_{\epsilon \to 0^{+}}$ on both sides, we get
\begin{align*}
    \lim_{\epsilon \to 0^{+}}\limsup_{n\to \infty}\mathbb {P}\left(Y_{n}>\frac {1}{X_{n}}\right)\leq \lim_{\epsilon \to 0^{+}}F_{1/X}(\epsilon)=0
\end{align*}
since $1/X>0$ and $\lim_{\epsilon \to 0^{+}}F_{1/X}(\epsilon)=\lim_{\epsilon \to 0^{+}}\mathbb {P}\left(1/X\leq \epsilon\right)=0$. Let $Z_{n}=K_{\epsilon}(M_{n})\Big/\left(\alpha \log (n/\epsilon^{1/\alpha})n^{\alpha}\right)$ and
\begin{align*}
    M_{n}=\left\{Y_{n}>\frac {1}{X_{n}}\right\},
\end{align*}
then we have
\begin{align*}
    \mathbb {P}\left(\left|\frac {1}{X_{n}}-Z_{n}\right|\leq \epsilon\right)=\mathbb {P}\left(\left|\frac {1}{X_{n}}-Z_{n}\right|\leq \epsilon, M_{n}\right)+\mathbb {P}\left(\left|\frac {1}{X_{n}}-Z_{n}\right|\leq \epsilon, M^{\complement}_{n}\right).
\end{align*}
Applying $\lim_{\epsilon \to 0^{+}}\limsup_{n\to \infty}$ on both sides, we get
\begin{align*}
    \lim_{\epsilon \to 0^{+}}\limsup_{n\to \infty}\mathbb {P}\left(\left|\frac {1}{X_{n}}-Z_{n}\right|\leq \epsilon\right)&\leq \lim_{\epsilon \to 0^{+}}\limsup_{n\to \infty}\mathbb {P}\left(\left|\frac {1}{X_{n}}-Z_{n}\right|\leq \epsilon,M_{n}\right)\\
    &+\lim_{\epsilon \to 0^{+}}\limsup_{n\to \infty}\mathbb {P}\left(\left|\frac {1}{X_{n}}-Z_{n}\right|\leq \epsilon,M^{\complement}_{n}\right).
\end{align*}
Since
\begin{align*}
    \lim_{\epsilon \to 0^{+}}\limsup_{n\to \infty}\mathbb {P}\left(\left|\frac {1}{X_{n}}-Z_{n}\right|\leq \epsilon,M_{n}\right) \leq \lim_{\epsilon \to 0^{+}}\limsup_{n\to \infty}\mathbb {P}\left(M_{n}\right)=0,
\end{align*}
we get
\begin{align*}
    \lim_{\epsilon \to 0^{+}}\limsup_{n\to \infty}\mathbb {P}\left(\left|\frac {1}{X_{n}}-Z_{n}\right|\leq \epsilon\right)&\leq \lim_{\epsilon \to 0^{+}}\limsup_{n\to \infty}\mathbb {P}\left(\left|\frac {1}{X_{n}}-Z_{n}\right|\leq \epsilon,M^{\complement}_{n}\right)\\
    &\leq \lim_{\epsilon \to 0^{+}}\limsup_{n\to \infty}\mathbb {P}\left(M^{\complement}_{n}\right)\\
    &=1.
\end{align*}
Thus, $Z_{n}$ and $1/X_{n}$ have the same limiting distribution. In other words,
\begin{align*}
    \frac {K_{\epsilon}(M_{n})}{\alpha \log (n/\epsilon^{1/\alpha})n^{\alpha}}\to \frac {1}{X}
\end{align*}
in distribution as $n\to \infty$.
\end{proof}
\appendix
\section{Improvement on speed of convergence}\label{a:1}
Following the definitions of the proof of Theorem~\ref{t:main}, we know that
\begin{align*}
    K_{\epsilon}(A,\vec b)\Big/\left(\alpha \log (n/\epsilon^{1/\alpha})n^{\alpha}\right)\quad\text{and}\quad k_{n}\Big/\left(\alpha \log (n/\epsilon^{1/\alpha})n^{\alpha}\right)
\end{align*}
have the same limiting distribution and recall 
\begin{align*}
    k_{n}=\frac {-\alpha \log n+\log \epsilon \xi_{n}}{\log \left|1-\frac {\xi_{n}}{n^{\alpha}}\right|}=\frac {-\alpha \log (n/\epsilon^{1/\alpha})}{\log \left|1-\frac {\xi_{n}}{n^{\alpha}}\right|}+\frac {\log \xi_{n}}{\log \left|1-\frac {\xi_{n}}{n^{\alpha}}\right|},
\end{align*}
where $\epsilon=10^{-3}$ and $\xi_{n}\to \text{exp}(1/2)$ in distribution as $n\to \infty$ for the example in Section~\ref{ex:1}. By Lemma~\ref{lem:5},
\begin{align*}
    -\frac {1}{n^{\alpha}\log \left|1-\frac {\xi_{n}}{n^{\alpha}}\right|}\to \frac {1}{X}
\end{align*}
in distribution as $n\to \infty$. Therefore, divide $k_{n}$ by $n^{\alpha} \log (n/\epsilon^{1/\alpha})$ and factor out the term $-1\Big/n^{\alpha}\log \left|1-\frac {\xi_{n}}{n^{\alpha}}\right|$ giving
\begin{align*}
    \frac {k_{n}}{ n^{\alpha}\log (n/\epsilon^{1/\alpha})}=-\frac {1}{n^{\alpha}\log \left|1-\frac {\xi_{n}}{n^{\alpha}}\right|}\left(\alpha-\frac {\log \xi_{n}}{\log(n/\epsilon^{1/\alpha})}\right).
\end{align*}
We can treat $\left(\alpha-\log \xi_{n}\Big/\log(n/\epsilon^{1/\alpha})\right)$ as a correction term and replace $\log(\xi_{n})$ with its expectation. To get a faster convergence, we move $\left(\alpha-\mathbb {E}\left[\log \xi_{n}\right]\Big/\log(n/\epsilon^{1/\alpha})\right)$ to the left hand side and take the reciprocal to find
\begin{align*}
    Z=\frac {\alpha\log(n/\epsilon^{1/\alpha})-\mathbb {E}\left[\log(\xi_{n})\right]}{\log(n/\epsilon^{1/\alpha})}\cdot \frac {n^{\alpha}\log(n/\epsilon^{1/\alpha})}{k_{n}}.
\end{align*}
For the example in Section~\ref{ex:1}, we have
\begin{align*}
    Z_1=\frac {\log(n/\epsilon)-\mathbb {E}\left[\log(\xi_{n})\right]}{\log(n/\epsilon)}\cdot \frac {n\log(n/\epsilon)}{k_{n}}.
\end{align*}
Figure~\ref{f:scaled_runtime1refined} shows the refinement. Each plot has $10^3$ samples.
\begin{figure}[tbp]
\centering
\includegraphics[width=11.5cm, height=7.5cm]{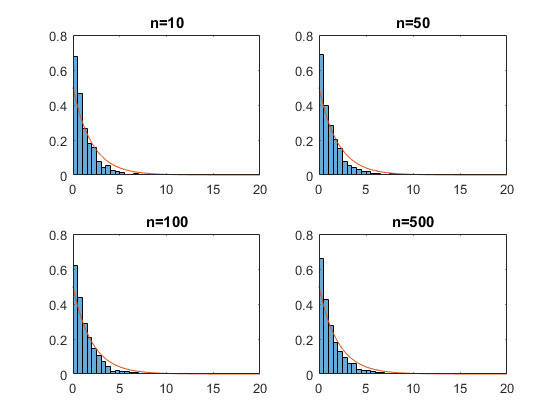}
\caption{The distribution of $Z_1$ converges to the probability density function of $X\sim \text{exp}(1/2)$ as $n$ becomes larger. The figure is much like Figure~\ref{f:scaled_runtime1} but with a faster speed of convergence.}\label{f:scaled_runtime1refined}
\end{figure}
\bibliographystyle{amsalpha}
\bibliography{library}

\providecommand{\bysame}{\leavevmode\hbox to3em{\hrulefill}\thinspace}
\providecommand{\MR}{\relax\ifhmode\unskip\space\fi MR }
\providecommand{\MRhref}[2]{%
  \href{http://www.ams.org/mathscinet-getitem?mr=#1}{#2}
}
\providecommand{\href}[2]{#2}
\begin{thebibliography}{DMOT14}

\bibitem[BF11]{Burden2011}
Richard~L. Burden and John~Douglas Faires, \emph{{Numerical Analysis 9th
  Edition}}, Cengage Learning, 2011.

\bibitem[Bor08]{article}
Folkmar Bornemann, \emph{On the numerical evaluation of fredholm determinants},
  Mathematics of Computation \textbf{79} (2008), 871–915.

\bibitem[DMOT14]{Deift2014}
P~A Deift, G~Menon, S~Olver, and T~Trogdon, \emph{{Universality in numerical
  computations with random data}}, Proceedings of the National Academy of
  Sciences of the United States of America \textbf{111} (2014), no.~42,
  14973--8.

\bibitem[DT17]{Deift2017}
P~Deift and T~Trogdon, \emph{{Universality for eigenvalue algorithms on sample
  covariance matrices}}, SIAM J. Numer. Anal. (2017), (to appear).

\bibitem[DT18a]{Deift2016}
\bysame, \emph{{Universality for the Toda Algorithm to Compute the Largest
  Eigenvalue of a Random Matrix}}, Communications on Pure and Applied
  Mathematics \textbf{71} (2018), no.~3, 505--536.

\bibitem[DT18b]{Deift2017c}
\bysame, \emph{{Universality in numerical computation with random data: Case
  Studies, Analytical Results and Some Speculations}}, Abel Symposia, vol.~13,
  mar 2018, pp.~221--231.

\bibitem[Dur10]{Durrett2010}
Rick Durrett, \emph{Probability: Theory and examples}, 4 ed., Cambridge Series
  in Statistical and Probabilistic Mathematics, Cambridge University Press,
  2010.

\bibitem[For10]{Forrester10log-gasesand}
P.J. Forrester, \emph{Log-gases and random matrices (lms-34)}, London
  Mathematical Society Monographs, Princeton University Press, 2010.

\bibitem[Gau11]{Gautschi:2011:NA:2208098}
Walter Gautschi, \emph{Numerical analysis}, Birkhäuser Basel, 2011.

\bibitem[KV02]{article2}
Arno Kuijlaars and Maarten Vanlessen, \emph{Universality for eigenvalue
  correlations from the modified jacobi unitary ensemble}, Internat. Math.
  Research Notices \textbf{243} (2002), 1575–1600.

\bibitem[Mez06]{2006math.ph...9050M}
Francesco Mezzadri, \emph{How to generate random matrices from the classical
  compact groups}, Notices of the American Mathematical Society \textbf{54}
  (2006), 592--604.

\bibitem[NN77]{neumann1877untersuchungen}
C.~Neumann and I.~Newton, \emph{Untersuchungen {\"u}ber das logarithmische und
  newton'sche potential}, B. G. Teubner, 1877.

\bibitem[OLBC10]{DLMF}
F~W~J Olver, D~W Lozier, R~F Boisvert, and C~W Clark, \emph{{NIST Handbook of
  Mathematical Functions}}, Cambridge University Press, 2010.

\bibitem[PDM14]{DiagonalRMT}
C~W Pfrang, P~Deift, and G~Menon, \emph{{How long does it take to compute the
  eigenvalues of a random symmetric matrix?}}, Random matrix theory,
  interacting particle systems, and integrable systems, MSRI Publications
  \textbf{65} (2014), 411--442.

\bibitem[Tri85]{tricomi1985integral}
F.G. Tricomi, \emph{Integral equations}, (Pure and applied mathematics, v. 5),
  Dover Publications, 1985.

\end{thebibliography}
\end{document}